\titleformat{\subsection}[runin]
{\bfseries} {\thesubsection{.}}{0.15cm}{}[.]
\titleformat{\subsubsection}[runin]
{\em}{\thesubsubsection{.}}{0.15cm}{}[.]
\newtheorem{theorem}{Theorem}[section]
\newtheorem{lemma}[theorem]{Lemma}
\newtheorem{corollary}[theorem]{Corollary}
\theoremstyle{definition}
\newtheorem{remark}[theorem]{Remark}
\newtheorem{example}[theorem]{Example}
\numberwithin{equation}{section}
\numberwithin{figure}{section}
\newcommand\Ccal{\mathcal{C}}
\newcommand\Ocal{\mathcal{O}}
\newcommand\Cscr{\mathscr{C}}
\newcommand\Jscr{\mathscr{J}}
\newcommand\Oscr{\mathscr{O}}
\newcommand\C{\mathbb{C}}
\newcommand\N{\mathbb{N}}
\newcommand\R{\mathbb{R}}
\newcommand\Z{\mathbb{Z}}
\newcommand\wt{\widetilde}
\newcommand\wh{\widehat}
\def\sing{\mathrm{sing}}
\begin{document}

\fancyhead[LO]{Divisors defined by noncritical functions}
\fancyhead[RE]{F.\ Forstneri\v c} 
\fancyhead[RO,LE]{\thepage}

\thispagestyle{empty}

\vspace*{1cm}
\begin{center}
{\bf\LARGE Divisors defined by noncritical functions}

\vspace*{0.5cm}

{\large\bf  Franc Forstneri\v c} 
\end{center}


\vspace*{1cm}

\begin{quote}
{\small
\noindent {\bf Abstract}\hspace*{0.1cm}
In this paper we show that every complex hypersurface $A$ in a Stein manifold $X$ 
with $H^2(X;\Z)=0$ is the divisor of a holomorphic function on $X$ 
which has no critical points in $X\setminus A_\sing$. 
A similar result is proved for complete intersections of higher codimension.

\vspace*{0.2cm}

\noindent{\bf Keywords}\hspace*{0.1cm} Stein manifold, divisor, noncritical function, complete intersection

\vspace*{0.1cm}


\noindent{\bf MSC (2010):}\hspace*{0.1cm}}  32C25, 32E10, 32E30, 32S20
\end{quote}


\section{Introduction} 
\label{sec:intro}

In this paper we obtain some new results on the classical subject of 
complete intersections in Stein manifolds. For the definitions and background on this topic 
we refer to the surveys of Forster \cite{Forster1983} and Schneider \cite{Schneider1982MA} 
(see also \cite[\S 8.5]{Forstneric2017E}).  
We begin with the following result on complex hypersurfaces 
which correspond to principal divisors. 

%
%
\begin{theorem}\label{th:main1}
Let $A$ be a closed complex hypersurface in a Stein manifold $X$.
Assume that there is a continuous function $h$ on $X$ which is holomorphic in a 
neighborhood of $A$ and whose divisor equals $A=h^{-1}(0)$.
Then there exists a holomorphic function $f$ on $X$ whose divisor equals $A$
and whose critical points are precisely the singular points of $A$:
\[
	\mathrm{Crit}(f)=A_\sing.
\]
\end{theorem}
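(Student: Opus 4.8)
\medskip\noindent\textit{A plan.}\
First observe that the hypothesis is equivalent to $A$ being a principal divisor. Indeed, writing $h=u_i f_i$ near $A$ for local defining functions $f_i$ of $A$ and holomorphic units $u_i$, the functions $u_i:=h/f_i$ extend continuously and without zeros over the ambient charts $U_i$ (they equal $h/f_i$ off $A$ and the holomorphic germ near $A$), and $u_i/u_j=f_j/f_i$ is the inverse of the transition cocycle of $\Ocal_X(A)$; hence $\Ocal_X(A)$ is topologically, and therefore (Oka--Grauert, equivalently the exponential sequence with Cartan~B) holomorphically, trivial, and conversely $h$ may then be chosen holomorphic. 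So fix $f_0\in\Ocal(X)$ with $\div(f_0)=A$; every holomorphic function with divisor $A$ is $gf_0$ for a unit $g\in\Ocal^*(X)$. At a point of $A_\reg$ the reduced function $f_0$ has $df_0\neq0$, so $d(gf_0)=g\,df_0\neq0$ there; at a point of $A_\sing$ one has $df_0=0$ (otherwise $A$ would be smooth there), so $d(gf_0)=0$. Thus $\mathrm{Crit}(gf_0)\cap A=A_\sing$ automatically for every unit $g$, and Theorem~\ref{th:main1} is \emph{equivalent} to the statement that there is a unit $g\in\Ocal^*(X)$ such that $gf_0$ has no critical point on the Stein manifold $\Omega:=X\setminus A$ — equivalently, such that the meromorphic $1$-form $dg/g+df_0/f_0$, which has logarithmic poles along $A$ and is hence automatically nonvanishing near $A$, is nowhere zero on $\Omega$.

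I would then build $f=gf_0$ by the convex-integration construction of holomorphic functions without critical points on Stein manifolds (Gromov, Forstneri\v c; see \cite{Forstneric2017E}), run \emph{relative to the prescribed divisor}: every correction will be a multiplication $f\mapsto uf$ by a holomorphic unit $u$ close to $1$ on the relevant compact set, so that $\div(f)=A$ is preserved throughout and in the limit. Fix a strongly plurisubharmonic Morse exhaustion $\rho$ of $X$ adapted to $A$, with relatively compact, strongly pseudoconvex sublevel sets $X_c=\{\rho<c\}$; then all Morse indices are $\le n=\dim X$, which is exactly what the noncritical $h$-principle requires. One constructs inductively holomorphic functions $f_j=g_jf_0$ near $\overline{X}_{c_j}$ (with $c_j\uparrow\infty$) such that $\div(f_j)=A$, $f_j$ is noncritical on $\overline{X}_{c_j}\setminus A_\sing$, and $f_{j+1}$ is so close to $f_j$ on $\overline{X}_{c_j}$ that $f:=\lim_j f_j$ exists locally uniformly; noncriticality off $A_\sing$ is an open condition and survives the limit, and $g_{j+1}/g_j\to1$ keeps $f/f_0$ zero-free, so $\div(f)=A$ and hence $\mathrm{Crit}(f)=A_\sing$.

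The inductive step is of the two usual kinds. Across a \emph{convex bump} adding no topology, extend $g_j$ holomorphically past the bump by Runge approximation on a Cartan pair (Theorems~A and~B) and then repair the noncritical condition on the compact strongly pseudoconvex enlargement, away from $A_\sing$, by a small generic modification $g\mapsto e^{\varepsilon\psi}g$ with $\psi\in\Ocal(X)$ — harmless to the divisor, and effective by a transversality argument with enough parameters (Theorem~A). Across a \emph{handle} of index $k\le n$ attached along a totally real disc, extend $f_j$ over the handle keeping it noncritical and of divisor $A$ by the convex-integration lemma for holomorphic submersions to $\C$: on a neighbourhood of the (totally real) core the factor $f_0$ is a unit and admits a holomorphic logarithm, so making $d(gf_0)$ nonvanishing becomes the standard problem of making $d\log g+d\log f_0$ nonvanishing; one produces a period-dominating holomorphic spray of \emph{units} $g$ realizing the prescribed homotopy class of nonvanishing $1$-forms along the core and glues by Mergelyan approximation on the handle. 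The singular set $A_\sing$ I would handle last, stratifying $A$ into smooth strata and carrying the construction over a neighbourhood of each successive stratum, using that along $A$ noncriticality in the transverse directions is free.

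The step I expect to be the real obstacle is the handle step in this divisor-constrained form, for two reasons. One must route the handles so that they meet $A$ only admissibly — delicate, since $A$ has real codimension $2$ while handle cores may have dimension $n$. And, above all, one must check that the \emph{period domination} driving the convex integration is attainable by deforming the unit $g$ alone, rather than deforming $f$ freely; this is the crux, and I would address it by localizing to a neighbourhood of the totally real core, absorbing $f_0$ into a logarithm there, thereby reducing period domination to the form already available for noncritical functions, and then verifying that the deformations so obtained are of exponential (hence divisor-preserving) type. Granting Theorem~\ref{th:main1}, the higher-codimension statement announced in the abstract should follow by iterating the construction one defining function at a time, removing the hypersurface cut out by each successive function while keeping the partial map a submersion off the relevant singular locus.
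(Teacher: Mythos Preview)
Your overall strategy is the right one and matches the paper's in spirit: run the Forstneri\v{c} noncritical-function machinery relative to the fixed divisor, using multiplication by units (equivalently, logarithms) to convert ``noncritical with values in $\C_*$'' into the standard noncritical problem on convex bumps away from $A$. The paper's Theorem~\ref{th:approximation} for $q=1$ is exactly your logarithm trick, and the splitting/gluing over Cartan pairs is the same.

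The genuine gap is precisely where you flagged it: handles meeting $A$. You correctly note that cores of dimension up to $n$ cannot be made to avoid a real-codimension-$2$ hypersurface by general position, but you offer only a vague stratification remedy. The paper's resolution is the missing organizing idea and is quite clean: before attaching any bumps or handles in passing from $K$ to the next sublevel $L$, one first extends the current function (with approximation on $K$ and interpolation to order $r$ on $A$) to a neighborhood of $K\cup(A\cap L)$. This uses only Cartan plus a genericity lemma (\cite[Lemma~2.9]{Forstneric2016JEMS}) to kill critical points in a deleted neighborhood of $A$. Once $A\cap L$ already lies inside the domain, the entire Morse-theoretic enlargement from $K$ to $L$ --- every convex bump and every handle --- takes place in $X\setminus A$. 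There your unit factorization is unobstructed, the handle core $M$ is a disc so $f_0$ has a holomorphic logarithm on a neighborhood of $M$, and one simply deforms $f$ $\Cscr^0$-small (hence still nonvanishing) so that $df$ is $\C$-linear and nonzero along $M$, then applies Mergelyan. No period conditions and no convex integration are needed for $q=1$; your ``period domination by units'' worry is a red herring once the handles avoid $A$. Likewise, your separate treatment of $A_\sing$ by stratification is unnecessary: interpolating $f$ to $h$ to order $r\ge 1$ along all of $A$ forces $f$ to generate $\Jscr_A$, which automatically gives $\mathrm{Crit}(f)\cap A=A_\sing$.

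Two smaller points. Your ``transversality with enough parameters'' for the bump step is not quite a proof: a generic perturbation of $h=\log f$ leaves discrete critical points on the bump, and removing those on a fixed compact requires the submersion approximation theorem on convex sets (Theorem~\ref{th:approximation}), not naive genericity --- though since you cite the right machinery this is more a matter of phrasing. And your closing remark, that the higher-codimension result follows by iterating one defining function at a time, is not how the paper proceeds and does not obviously work: making each $f_i$ individually noncritical does not make $(f_1,\ldots,f_q)$ a submersion. The paper runs the whole construction directly for maps to $\C^q_*$.
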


What is new is that the defining function $f\in\Ocal(X)$ of $A$
can be chosen to have no critical points in $X\setminus A$. (In fact, if the divisor 
of $f\in \Ocal(X)$ equals $A$, then a point $x\in A$ is a critical point of $f$ if and only if $x$ a singular 
point of $A$.) The family $\{f^{-1}(c): c\in \C\}$ is then a foliation of $X$ by closed complex 
hypersurfaces all which, except perhaps the zero fibre $A=f^{-1}(0)$, are smooth. 
In particular, if the hypersurface $A$ in Theorem \ref{th:main1} is smooth,
then it is defined by a holomorphic function without any critical points on $X$. 

Without the condition that $f$ be noncritical on $X\setminus A$, the result is well known and follows 
from the isomorphisms (see e.g.\  \cite[\S 5.2]{Forstneric2017E}) 
\begin{equation}\label{eq:H1}
	H^1(X;\Ocal_X^*) \cong H^1(X;\Ccal_X^*) \cong H^2(X;\Z)
\end{equation}
showing that a second Cousin problem on a Stein manifold
is solvable by holomorphic functions if it is solvable by continuous functions.
Here, $\Ocal_X^*\subset \Ccal_X^*$ are the sheaves of nonvanishing holomorphic and 
continuous functions on $X$, respectively.

Let us consider the special case of Theorem \ref{th:main1} when $H^2(X;\Z)=0$. In view of \eqref{eq:H1} 
it follows that every divisor on $X$ is a principal divisor, and hence every complex hypersurface
in $X$ is defined by a single holomorphic equation. This gives the following corollary
which answers a question asked by Antonio Alarc{\'o}n (private communication).

\begin{corollary}\label{cor:divisors}
Let $X$ be a Stein manifold with $H^2(X;\Z)=0$. For every closed complex hypersurface
$A$  in $X$ there exists a holomorphic function $f\in\Ocal(X)$ whose divisor equals $A$ and whose critical points 
are precisely the singular points of $A$.
\end{corollary}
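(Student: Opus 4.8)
\emph{Proof plan.} The plan is to check that the hypothesis of Theorem~\ref{th:main1} is automatically met once $H^2(X;\Z)=0$, and then invoke that theorem. Thus the whole task reduces to producing a single holomorphic function $h\in\Ocal(X)$ whose zero divisor is exactly $A$: such an $h$ is continuous on $X$ and holomorphic in a neighbourhood of $A$ (in fact on all of $X$), so Theorem~\ref{th:main1} applies verbatim and delivers the desired $f\in\Ocal(X)$ with $\mathrm{Crit}(f)=A_\sing$.

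To obtain $h$, I would argue as follows. The hypersurface $A$ determines an effective divisor $D_A\in H^0(X;\Div_X)$, namely the sum of its irreducible components each taken with multiplicity one, where $\Div_X=\Mcal_X^*/\Ocal_X^*$ is the sheaf of divisors. The exponential sheaf sequence $0\to\Z\to\Ocal_X\to\Ocal_X^*\to 0$ combined with $H^1(X;\Ocal_X)=H^2(X;\Ocal_X)=0$ (Cartan's Theorem~B, $X$ being Stein) gives $H^1(X;\Ocal_X^*)\cong H^2(X;\Z)=0$, which is precisely the isomorphism recorded in \eqref{eq:H1}. The long exact cohomology sequence attached to $0\to\Ocal_X^*\to\Mcal_X^*\to\Div_X\to 0$ then contains the segment $H^0(X;\Mcal_X^*)\to H^0(X;\Div_X)\to H^1(X;\Ocal_X^*)=0$, so $D_A=\div(g)$ for some global meromorphic function $g$ on $X$. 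Since $D_A$ is effective, $g$ has no poles; hence $h:=g\in\Ocal(X)$ satisfies $\div(h)=D_A$, i.e.\ $h^{-1}(0)=A$ with $h$ of vanishing order one along each component of $A$.

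Substituting this $h$ into Theorem~\ref{th:main1} finishes the argument, and I do not expect any real obstacle along the way: all of the analytic work — upgrading a defining function that is holomorphic only near $A$ to one that is genuinely noncritical on $X\setminus A_\sing$ — has already been carried out in the proof of Theorem~\ref{th:main1}, so Corollary~\ref{cor:divisors} is a purely formal consequence of it together with the classical fact \eqref{eq:H1} that on such $X$ every divisor is principal.
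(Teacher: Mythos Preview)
Your proposal is correct and follows essentially the same approach as the paper: the paper simply observes that, by the isomorphism \eqref{eq:H1}, the vanishing of $H^2(X;\Z)$ forces every divisor on $X$ to be principal, so that a global holomorphic defining function $h$ exists and Theorem~\ref{th:main1} applies. You have merely spelled out in slightly more detail (via the exponential and divisor sequences) why $H^1(X;\Ocal_X^*)=0$ yields a holomorphic $h$ with $\div(h)=A$.
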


We compare Theorem \ref{th:main1} with another result in the literature.
Assume that $A\subset X$ is a smooth complex hypersurface satisfying the 
assumption of Theorem \ref{th:main1}. Its normal bundle 
$N_{A/X}=TX|_A/TA$ is then a trivial line bundle. By \cite[Corollary 2.10]{Forstneric2003AM}
(see also \cite[Corollary 9.16.2]{Forstneric2017E})  there exists
a holomorphic function $f$ on $X$ without critical points 
such that $A$ is a union of connected components of the zero fibre $f^{-1}(0)$.
The improvement in Theorem \ref{th:main1} and Corollary \ref{cor:divisors} is that $f$ 
can be chosen such that its zero fibre is exactly $A$.

We also have the following analogous result for submanifolds of higher codimension.

%
%
\begin{theorem} \label{th:main2}
Assume that $X$ is a Stein manifold of dimension $n>1$, $q\in\{1,\ldots,n-1\}$,  
and $h=(h_1,\ldots,h_q)\colon X\to\C^q$ is a continuous map which is a holomorphic submersion 
in a neighborhood of its zero fibre $A=h^{-1}(0)$. If there exists a $q$-tuple  of continuous 
$(1,0)$-forms $\theta=(\theta_1,\ldots,\theta_q)$ on $X$ which are pointwise linearly independent at every point of $X$
and agree with $dh=(dh_1,\ldots,dh_q)$ in a neighborhood of $A$, then there is a holomorphic submersion 
$f\colon X\to\C^q$ such that $A=f^{-1}(0)$ and $f-h$ vanishes to a given finite order on $A$.
Such $\theta$ always exists if $q\le \frac{n+1}{2}$; equivalently, if $\dim A\ge \left[\frac{n}{2}\right]$.
\end{theorem}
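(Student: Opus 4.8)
The plan is to reduce Theorem \ref{th:main2} to an application of the Oka principle for sections of a suitable fibration (the technique behind \cite[Corollary 2.10]{Forstneric2003AM}), after first fixing up the topological/homotopy data so that the Oka method applies. The key object is the ``jet'' map that records a holomorphic submersion $f\colon X\to\C^q$ together with its differential: we seek $f$ with $f^{-1}(0)=A$ and with prescribed $k$-jet along $A$, and we must guarantee that $f$ is a submersion on all of $X$, not merely near $A$. The hypothesis provides a homotopy-theoretic solution to exactly this problem: the continuous $q$-tuple $\theta=(\theta_1,\dots,\theta_q)$ of pointwise linearly independent $(1,0)$-forms is, on the nose, a continuous lift of the datum ``a submersion to $\C^q$'' that already agrees with $dh$ near $A$.

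\textbf{Step 1: Set up the relative Oka problem.} I would consider the space of pairs $(f,\text{formal data along }A)$ and the corresponding fibration whose sections over $X$ are holomorphic submersions $X\to\C^q$ with the prescribed finite jet along $A$. Concretely, fix the finite order $m$; the condition ``$f-h$ vanishes to order $m$ on $A$'' together with ``$df$ nonvanishing as a map of rank $q$'' cuts out a subset of a jet bundle, and the relevant stratified fibration is the one whose fiber over $x\in X$ is the set of linear surjections $T_xX\to\C^q$ (an open subset of $\mathrm{Hom}(T_xX,\C^q)$, hence an Oka manifold, being a complement of a subvariety of positive codimension in a vector space). The submersion $h$ near $A$ gives a holomorphic section near $A$; the $q$-tuple $\theta$ gives a global \emph{continuous} section agreeing with it near $A$.

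\textbf{Step 2: Apply the parametric/relative Oka principle.} Using the Oka principle for liftings in the form used in \cite[Corollary 2.10]{Forstneric2003AM} (equivalently \cite[Corollary 9.16.2]{Forstneric2017E} and the Oka--Grauert machinery in \cite[Ch.~5, Ch.~9]{Forstneric2017E}), deform the continuous section provided by $\theta$, relatively to a neighborhood of $A$ where it is already holomorphic, to a holomorphic submersion $f\colon X\to\C^q$. The relative version keeps $f$ equal to $h$ to order $m$ along $A$, so in particular $A\subset f^{-1}(0)$; and since $f$ is a submersion, $f^{-1}(0)$ is smooth of codimension $q$, while near $A$ it coincides with $h^{-1}(0)=A$. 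A connectedness/closedness argument along the lines used for Theorem \ref{th:main1} then upgrades $A\subset f^{-1}(0)$ to $A=f^{-1}(0)$: one shrinks to ensure no extra components appear, or absorbs possible extra components using the freedom in the Oka homotopy (here the codimension $q<n$ and the smoothness of $f^{-1}(0)$ are what make ``$A$ is a union of components'' automatically ``$A$ equals the fiber'' once we also control the value, not just the zero set). The main obstacle I expect is precisely this last point --- ruling out spurious components of $f^{-1}(0)$ away from $A$ --- which should be handled exactly as in the hypersurface case (Theorem \ref{th:main1}) by the same component-counting/exhaustion argument, now in higher codimension.

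\textbf{Step 3: Existence of $\theta$ under $q\le (n+1)/2$.} This is pure obstruction theory. A pointwise linearly independent $q$-tuple of $(1,0)$-forms on $X$ is a continuous section of the bundle of surjective linear maps $TX\to\C^q$, i.e.\ a reduction of structure group / a continuous epimorphism $TX\to\underline{\C}^q$ (complex trivial bundle); equivalently, writing $E=TX$ of complex rank $n$, we need a continuous surjective bundle map $E\to X\times\C^q$, which exists iff $E$ splits off a trivial rank-$q$ summand, and this in turn follows once $q$ is below the stable range. The relevant fiber is the Stiefel-type manifold of surjections $\C^n\to\C^q$, which is $2(n-q)$-connected; since $X$ is Stein of complex dimension $n$ it has the homotopy type of a CW complex of real dimension $\le n$, so the primary (and all) obstructions in $H^{i+1}(X;\pi_i(\text{fiber}))$ vanish as soon as $n\le 2(n-q)+1$, i.e.\ $q\le (n+1)/2$. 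One must also match the given germ of such a section near $A$ (where it is $dh$); since the obstruction classes are relative and $X$ deformation-retracts onto a complex of dimension $\le n$, the same dimension count gives the relative extension. I do not expect difficulties here beyond bookkeeping with the connectivity of the Stiefel manifold and the CW dimension of Stein manifolds (Andreotti--Frankel/Milnor Morse theory).

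Thus the proof is: (i) phrase the desired $f$ as a holomorphic section of an Oka-type stratified fibration with prescribed jet along $A$; (ii) feed the continuous section coming from $\theta$ into the relative Oka principle to produce $f$, and clean up $f^{-1}(0)$ to equal $A$ as in Theorem \ref{th:main1}; (iii) establish the range $q\le (n+1)/2$ by obstruction theory against the $(2(n-q))$-connected Stiefel manifold of surjections, using $\dim_{\mathrm{CW}}X\le n$.
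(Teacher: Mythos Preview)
Your Step 3 (existence of $\theta$ when $q\le(n+1)/2$) is fine and matches what the paper does by citation.

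The genuine gap is in Step 2. You treat the condition $f^{-1}(0)=A$ as something to be fixed \emph{after} producing a holomorphic submersion via the Oka/h-principle machinery of \cite{Forstneric2003AM}, writing that ``a connectedness/closedness argument along the lines used for Theorem~\ref{th:main1}'' will remove spurious components. But that is not how Theorem~\ref{th:main1} works, and no such post-hoc removal step exists in the paper. The black-box result you invoke (\cite[Corollary 2.10]{Forstneric2003AM}) yields only that $A$ is a union of components of $f^{-1}(0)$; upgrading this to $A=f^{-1}(0)$ is precisely the new content here, and you have not proposed a mechanism for it. Given a holomorphic submersion $f$ with $f^{-1}(0)=A\cup B$, deforming $f$ to kill $B$ while keeping it a submersion and fixing the jet on $A$ is a problem of the same type as the theorem itself.

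What the paper actually does is rerun the inductive construction behind \cite[Theorem 2.5]{Forstneric2003AM} (exhaustion by strongly pseudoconvex domains, noncritical convex-bump steps, critical handle-attaching steps with Gromov's convex integration for the $q$-coframe when $q>(n+1)/2$), but with one modification throughout: every local approximation and every glued map is required to take values in $\C^q_*=\C^q\setminus\{0\}$ on the part of the domain disjoint from $A$. The enabling ingredient is a new approximation lemma (Theorem~\ref{th:approximation}): any holomorphic submersion from a neighborhood of a compact convex set in $\C^n$ into $\C^q_*$ can be approximated by a global holomorphic submersion $\C^n\to\C^q_*$ (for $q=1$ take a logarithm; for $q>1$ approximate by a generic polynomial $P$ and precompose with a Fatou--Bieberbach-type automorphism avoiding the bad set $\{P=0\}\cup\{\mathrm{rank}\,dP<q\}$). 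In the handle-attaching step the extension across the totally real disc is kept $\Ccal^0$-small, so no new zeros are introduced. Thus $f^{-1}(0)=A$ is maintained at every stage of the induction, not achieved by cleanup at the end. Your proposal is missing exactly this ingredient.

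A secondary issue: in Step 1 you describe the fiber as ``surjections $T_xX\to\C^q$'' and then speak of applying the Oka principle for sections. A holomorphic section of that bundle is a holomorphic $q$-coframe, not the differential of a map; the passage from formal to holonomic submersions is the whole h-principle of \cite{Forstneric2003AM}, which is the exhaustion scheme above, not a fiberwise Oka statement. You cite the right end result, but the framing suggests a direct Oka argument that does not exist.
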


Note that the submersion $f\colon X\to\C^q$ in Theorem \ref{th:main2} defines a nonsingular 
holomorphic foliation on $X$ by closed embedded complete intersection submanifolds 
$\Sigma_c=\{f=c\}$ $(c\in\C^q)$ of codimension $q$ such that $\Sigma_0=A$.

Let us analyse the conditions in Theorem \ref{th:main2} more closely.
Assume that $A$ is a closed complex submanifold (not necessarily connected)
of pure codimension $q$ in a Stein manifold $X$.
Then, the normal bundle $N_{A/X}$ is trivial if and only if there is a
neighborhood $U\subset X$ of $A$ and a holomorphic submersion
$h\colon U\to \C^q$ such that $A=h^{-1}(0)$; the nontrivial (only if) direction 
follows from the tubular neighborhood theorem of Docquier and Grauert
\cite{DocquierGrauert1960} (see also \cite[Theorem 3.3.3, p.\ 74]{Forstneric2017E}).
Assume that this holds. If $h$ can be extended from a possibly smaller neighborhood of $A$
to a continuous map $\tilde h\colon X\to  \C^q$ satisfying $\tilde h^{-1}(0)=A$,
then an application of the Oka principle 
shows that $\tilde h$ can be deformed to a holomorphic map $f\colon X\to\C^q$
that agrees with $h$ to a given order on $A$ and satisfies $f^{-1}(0)=A$; 
in other words, $f$ defines $A$ as a complete intersection in $X$.
(See \cite[Theorem 8.5.6]{Forstneric2017E} which is a special
case of \cite[Theorem 8.6.1]{Forstneric2017E}. The Oka principle 
for complete intersections was first proved by  
Forster and Ramspott \cite[\S 4]{ForsterRamspott1966IM-2}, \cite{ForsterRamspott1966IM-1}.)
By topological reasons, such an extension $\tilde h$ always exists if 
$\dim A$ is sufficiently low compared to $\dim X$. 
In particular, we recall the following results in this direction.

\begin{theorem} \label{th:completeint}
{\rm (Forster and Ramspott \cite[Satz 11, Satz 12, Satz 13]{ForsterRamspott1966IM-2})}
Let $A$ be a closed complex submanifold of pure dimension in a Stein manifold $X$. 
\begin{itemize}
\item[\rm (a)] If $\dim A<\frac{1}{2}\dim X$, then $A$ is a 
complete intersection if and only if the normal bundle $N_{A/X}$ is trivial. 
\vspace{1mm}
\item[\rm (b)] 
If $X=\C^n$ then the same conclusion as in (a) holds if $\dim A\le \frac{2}{3}(n-1)$.
\vspace{1mm}
\item[\rm (c)] 
If $X=\C^n$ with $n\le 6$ then $A$ is a complete intersection if and only if $c_1(A)=0$
(i.e., the first Chern class of $A$ vanishes).
\end{itemize}
\end{theorem}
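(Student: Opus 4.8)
The plan is to settle the easy ``only if'' directions at once and then, in each of (a)--(c), reduce the converse to a purely topological extension problem that is solved by obstruction theory. The ``only if'' implications are elementary: if $A=f^{-1}(0)$ for a holomorphic submersion $f\colon X\to\C^q$ (here $q=\dim X-\dim A$), then $df$ trivializes the normal bundle, $N_{A/X}\cong A\times\C^q$; and if moreover $X=\C^n$, then $TX|_A=TA\oplus N_{A/X}$ is trivial, so $c(TA)=c(N_{A/X})^{-1}=1$ and in particular $c_1(A)=0$. For the converse, suppose $N_{A/X}$ is trivial. By the tubular neighborhood theorem of Docquier and Grauert \cite{DocquierGrauert1960} (see \cite[Theorem 3.3.3]{Forstneric2017E}) there are a neighborhood $U\supset A$ and a holomorphic submersion $h\colon U\to\C^q$ with $h^{-1}(0)=A$. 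If $h$ extends to a \emph{continuous} map $\tilde h\colon X\to\C^q$ with $\tilde h^{-1}(0)=A$, then the Oka principle for complete intersections \cite[Theorem 8.5.6]{Forstneric2017E} (first proved by Forster and Ramspott \cite{ForsterRamspott1966IM-2}) deforms $\tilde h$ into a holomorphic submersion $f$ with $f^{-1}(0)=A$ agreeing with $h$ to any prescribed finite order along $A$. So everything reduces to producing the continuous extension $\tilde h$, and the three dimension hypotheses are precisely what make this possible.

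For (a) I would run plain obstruction theory. After shrinking $U$, put a CW structure on $X$ in which $A$ lies in the interior of a subcomplex $U$, and extend $h$ cell by cell over the remaining cells, all of which miss $A$, keeping the values in $\C^q\setminus\{0\}\simeq S^{2q-1}$. The obstruction to extending over a $k$-cell lies in $\pi_{k-1}(S^{2q-1})$, which vanishes for $k-1<2q-1$. Since $X$ is Stein of complex dimension $n$, it has the homotopy type of a CW complex of real dimension $\le n$ (Andreotti--Frankel), so only cells with $k\le n$ occur, and all obstructions vanish as soon as $n\le 2q-1$, i.e.\ $\dim A=n-q<\frac12 n$. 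This proves (a).

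For (b) and (c), with $X=\C^n$ contractible, I would instead extend the restriction $h|_{\partial U}\colon\partial U\to S^{2q-1}$ over $W=\C^n\setminus U$. A Mayer--Vietoris computation using $H^k(\C^n)=0$ for $k>0$, together with the triviality of the sphere bundle $\partial U$ (a consequence of that of $N_{A/X}$), identifies the obstruction group in degree $k\ge 2$ with $H^{k-1}\bigl(A;\pi_{k-1}(S^{2q-1})\bigr)$. As $A$ is a closed complex submanifold of $\C^n$, it is itself Stein of complex dimension $n-q$, so this group vanishes unless $k-1\le n-q$, while $\pi_{k-1}(S^{2q-1})$ is nonzero only for $k\ge 2q$. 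Under $\dim A=n-q\le\frac23(n-1)$ one has $3q\ge n+2$, so the range $2q\le k\le n-q+1$ is empty and every obstruction vanishes; this gives (b). For (c), with $n\le 6$, one must first upgrade the hypothesis $c_1(A)=0$ to triviality of $N_{A/X}$: since $TA\oplus N_{A/X}$ is trivial and $A$ is Stein of complex dimension $\le 5$, the Chern classes $c_i(N_{A/X})\in H^{2i}(A)$ vanish automatically for $2i>\dim A$, and the identity $c(N_{A/X})=c(TA)^{-1}$ together with the classification of vector bundles on low-dimensional Stein manifolds (via Grauert's Oka principle) forces $N_{A/X}$ to be trivial; one then re-runs the obstruction analysis, in which the primary class in $\pi_{2q-1}(S^{2q-1})=\Z$ is now the Euler class of the trivial bundle $N_{A/X}$, hence $0$, and the finitely many remaining torsion obstructions are killed by the same considerations within the narrow admissible band of degrees.

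I expect the last step to be the main obstacle: verifying that $c_1(A)=0$ genuinely forces $N_{A/X}$ to be trivial, and that all the higher, unstable obstructions to the continuous extension vanish. It is precisely here that $n\le 6$ (equivalently $\dim A\le 5$) enters, keeping both the Chern classes of $N_{A/X}$ and the homotopy groups $\pi_{k-1}(S^{2q-1})$ within a controllable range. Parts (a) and (b) are by comparison soft, the real content being the bookkeeping that pins down the constants $\frac12$ and $\frac23$; everything else --- the reduction to a topological problem, the tubular neighborhood, and the passage from a continuous to a holomorphic solution --- is furnished by the results cited above.
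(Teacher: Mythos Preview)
The paper does not prove this theorem; it is quoted as a result of Forster and Ramspott \cite[Satz 11--13]{ForsterRamspott1966IM-2} and used only as input for Corollary~\ref{cor:Cn}. There is therefore no proof in the paper to compare your proposal against.

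On its own merits, your outline for (a) and (b) is essentially the standard argument and is correct: the Docquier--Grauert tubular neighbourhood together with the Oka principle for complete intersections reduces the problem to a continuous extension into $\C^q_*\simeq S^{2q-1}$, and your obstruction bookkeeping (Andreotti--Frankel for the CW dimension bound in (a); the Mayer--Vietoris identification $H^k(W,\partial U)\cong H^k(\C^n,U)\cong \tilde H^{k-1}(A)$ in (b)) produces exactly the constants $\tfrac12$ and $\tfrac23$. One small point in (a): the cells you fill lie in the pair $(X,U)$, so you should say a word about why the relevant relative complex still has cells only in dimensions $\le n$.

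For (c) your sketch is too thin. Two genuine difficulties remain. First, $c_1(A)=0$ gives only $c_1(N_{A/\C^n})=0$; since $c(TA)c(N_{A/\C^n})=1$, one gets $c_2(N_{A/\C^n})=-c_2(A)$, which is not obviously zero, so ``classification of vector bundles on low-dimensional Stein manifolds'' does not by itself force $N_{A/\C^n}$ to be trivial. Second, even after $N_{A/\C^n}$ is trivial, the obstruction window $2q\le k\le n-q+1$ is not empty in the cases not already covered by (b); for instance $n=6$, $q=2$ gives $k\in\{4,5\}$ with coefficients $\pi_3(S^3)=\Z$ and $\pi_4(S^3)=\Z/2$, so one must actually identify the primary obstruction with an Euler class and argue separately that the secondary $\Z/2$ obstruction vanishes. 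Your phrase ``killed by the same considerations'' is precisely where the content of Satz~13 in \cite{ForsterRamspott1966IM-2} lies; for a self-contained argument you should consult their paper.
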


The second  assumption on $h$ in Theorem \ref{th:main2}, that
the differential $dh=(dh_1,\ldots, dh_q)$ extends to a $q$-tuple of pointwise 
linearly independent $(1,0)$-forms $(\theta_1,\ldots,\theta_q)$ on $X$
(a {\em $q$-coframe} in the terminology of \cite{Forstneric2003AM}),
holds by topological reasons if 
\begin{equation}\label{eq:Abig}
	q=n -\dim A \le \frac{n+1}{2} \ \ \Longleftrightarrow \ \ \dim A \ge \left[\frac{n}{2}\right].
\end{equation}
(See \cite[Theorem 8.3.1(c), p.\ 361]{Forstneric2017E}.)
From (\ref{eq:Abig}) and Theorem \ref{th:completeint} we obtain items (1)--(3) 
in the following corollary to Theorem \ref{th:main2}. 
For the last statement in part (1), note that every holomorphic vector
bundle over an open Riemann surface is holomorphically trivial by the 
Oka-Grauert principle (see \cite[Theorem 5.3.1(iii), p.\ 213]{Forstneric2017E}).
The submanifold $A$ in the corollary is assumed to be of pure dimension.

%
%
\begin{corollary}\label{cor:Cn}
\begin{enumerate}
\item 
If $X$ is a Stein manifold of odd dimension $n=2k+1$ and $A\subset X$ is a closed complex 
submanifold of dimension $k$ with trivial normal bundle, then 
there exists a holomorphic submersion $f\colon X\to\C^{k+1}$ such that $A=f^{-1}(0)$. 
\vspace{1mm}
\item
If $A$ is a closed complex submanifold of $\C^n$ with trivial normal bundle and 
\[
	\left[\frac{n}{2}\right] \le  \dim A\le \frac{2}{3}(n-1),
\]
then $A$ is the zero fibre of a holomorphic submersion $f\colon \C^n\to\C^{n-\dim A}$.
\vspace{1mm}
\item
If $A$ is a closed complex submanifold of $\C^n$ with 
$\left[\frac{n}{2}\right] \le  \dim A <n\le 6$, then $A$ satisfies the conclusion in (2)
if any only if $c_1(A)=0$.
\vspace{1mm}
\item
If  $A$ is an algebraic submanifold of $\C^n$ of pure codimension $2$ with topologically trivial 
canonical bundle, then $A$ is the zero fibre of a holomorphic 
submersion $f=(f_1,f_2) \colon \C^n\to\C^{2}$.
\end{enumerate}
\end{corollary}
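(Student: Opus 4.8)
The plan is to derive all four parts from Theorem \ref{th:main2}, by checking in each case its two hypotheses on the Stein manifold in question: a continuous map $h\colon X\to\C^q$ that is a holomorphic submersion in a neighbourhood of $A=h^{-1}(0)$, and a $q$-coframe $\theta$ on $X$ agreeing with $dh$ near $A$. In all four cases the codimension $q=n-\dim A$ satisfies $q\le\frac{n+1}{2}$ --- for (1) with equality, for (2) and (3) this is precisely the assumption $\dim A\ge[n/2]$, and for (4) it holds since $q=2$ forces $n\ge 3$ --- so by \eqref{eq:Abig} the coframe $\theta$ always exists, and everything reduces to producing $h$.

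For parts (1)--(3) I would note that $A$ is a complete intersection: in (1) and (2) the normal bundle $N_{A/X}$ is trivial by hypothesis, so Theorem \ref{th:completeint}(a), respectively (b), applies; in (3) the hypothesis $c_1(A)=0$ is, by the adjunction formula $K_A\cong K_X|_A\otimes\det N_{A/X}$ together with the triviality of $K_{\C^n}$, equivalent to $c_1(\det N_{A/\C^n})=0$, so Theorem \ref{th:completeint}(c) applies. Hence the ideal sheaf of $A$ is generated by $q$ holomorphic functions, i.e.\ $A=F^{-1}(0)$ for some holomorphic $F\colon X\to\C^q$; since $A$ is smooth of codimension $q$, such an $F$ is automatically submersive along $A$, so $h:=F$ works and Theorem \ref{th:main2} finishes (1)--(3). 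The ``only if'' half of (3) is immediate: if $A=f^{-1}(0)$ for a submersion $f$ then $N_{A/\C^n}\cong f^*T_0\C^q$ is trivial, whence $c_1(A)=-c_1(\det N_{A/\C^n})=0$.

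The substantive case is (4), where $N_{A/\C^n}$ need not be trivial, so I would build $h$ on the ambient space via the Serre--Horrocks construction for codimension-two subvarieties. First, adjunction together with $K_{\C^n}\cong\Ocal_{\C^n}$ gives $\det N_{A/\C^n}\cong K_A$; and since the closed submanifold $A\subset\C^n$ is Stein, Cartan's Theorem B ($H^1(A,\Ocal_A)=0$) and the exponential sheaf sequence show that a holomorphic line bundle on $A$ is trivial as soon as its first Chern class vanishes, so the topologically trivial bundle $K_A$ --- hence $\det N_{A/\C^n}$ --- is holomorphically trivial. Now on the Stein manifold $\C^n$ the local obstruction to the Serre construction lives in the local $\mathrm{Ext}$-sheaf, which is isomorphic to $\det N_{A/\C^n}\cong\Ocal_A$ and therefore globally generated, while the global obstructions live in $H^1(\C^n,\Ocal)$ and $H^2(\C^n,\Ocal)$, both zero by Theorem B; so there is a holomorphic rank-two vector bundle $E$ on $\C^n$ fitting in an extension $0\to\Ocal_{\C^n}\to E\to\Ical_A\to 0$ whose tautological section $s\in H^0(\C^n,E)$ has zero scheme $A$. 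Smoothness of $A$ makes $\Ical_A$ radical and $s$ a regular section, and the induced isomorphism $N_{A/\C^n}\cong E|_A$ forces $ds$ to have rank two along $A$.

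To finish (4) I would then invoke the Oka--Grauert principle: $\C^n$ being Stein and contractible, $E$ is holomorphically trivial, so a trivialization $E\cong\Ocal_{\C^n}^2$ turns $s$ into a holomorphic map $s=(s_1,s_2)\colon\C^n\to\C^2$ with $s^{-1}(0)=A$ and with $ds_1,ds_2$ pointwise independent along $A$ --- a holomorphic submersion near $A$; feeding this $h:=s$ and the coframe from \eqref{eq:Abig} into Theorem \ref{th:main2} produces the desired submersion $f$. (When $\det N_{A/\C^n}$ is even algebraically trivial one may instead run the algebraic Serre construction on $\A^n$ and trivialize $E$ by the Quillen--Suslin theorem, obtaining polynomial $s_i$.) I expect the main obstacle to be exactly this Serre-construction step: arranging the extension class so that $E$ is locally free everywhere --- which is where the triviality of $\det N_{A/\C^n}$, extracted above from the topological triviality of $K_A$, is used --- and checking that the tautological section is regular, so that its zero scheme is the reduced manifold $A$ and $s$ is submersive along $A$. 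Once $E$ and $s$ are at hand the triviality of $E$ and the appeal to Theorem \ref{th:main2} are routine, and parts (1)--(3) are little more than bookkeeping built on Theorem \ref{th:completeint} and \eqref{eq:Abig}.
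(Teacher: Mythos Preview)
Your treatment of parts (1)--(3) matches the paper's: both derive these from Theorem~\ref{th:completeint} (to obtain a complete intersection defining map $F$, automatically submersive along the smooth $A$) together with the dimension bound~\eqref{eq:Abig} (for the $q$-coframe), and then invoke Theorem~\ref{th:main2}. Your added remark on the ``only if'' direction of (3) via adjunction is correct and makes explicit what the paper leaves to Theorem~\ref{th:completeint}(c).

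For part (4) you take a genuinely different route. The paper simply cites the Forster--Ohsawa result \cite[Corollary 3.2]{ForsterOhsawa1985} that such an algebraic $A$ is a complete intersection (indeed by two entire functions of finite order), handles $n=3$ separately via part~(1), and then applies Theorem~\ref{th:main2}. You instead reconstruct the complete-intersection map from scratch via the Serre--Horrocks construction: use adjunction and the Oka--Grauert principle on the Stein manifold $A$ to see that $\det N_{A/\C^n}$ is holomorphically trivial, run the local-to-global Ext spectral sequence (the vanishing of $H^2(\C^n,\Ocal)$ being the relevant obstruction) to lift a nowhere-vanishing section of $\mathcal{E}xt^1(\Ical_A,\Ocal)\cong\det N_{A/\C^n}$ to a locally free extension $0\to\Ocal\to E\to\Ical_A\to 0$, and trivialize $E$ by Oka--Grauert on the contractible $\C^n$. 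This is correct and is essentially what underlies the Forster--Ohsawa argument, stripped of their growth control; in particular your version never uses the algebraicity of $A$, so it gives a mild strengthening of (4) to arbitrary closed complex submanifolds of pure codimension two with topologically trivial canonical bundle. The trade-off is that you lose the finite-order conclusion for the generators that Forster--Ohsawa obtain, and you reprove a black box the paper is content to cite. Your handling of the case $n=3$ is also uniform (the inequality $q=2\le(n+1)/2$ holds with equality), whereas the paper routes it through part~(1), noting that normal bundles over open Riemann surfaces are automatically trivial.
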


Item (4) relies on a result of Forster and Ohsawa \cite[Corollary 3.2]{ForsterOhsawa1985},
noting also that a codimension $2$ submanifold $A\subset \C^n$ satisfies the lower
bound (\ref{eq:Abig}) when $n\ge 4$, while the case of a complex curve in $\C^3$
is covered by part (1). Forster and Ohsawa 
actually proved that the ideal of $A$ is generated by two entire functions of finite order, 
thereby answering a question of Cornalba and Griffiths \cite{CornalbaGriffiths1975}. 
On the other hand, the exist smooth algebraic curves in $\C^3$ whose ideal is not  
generated by two polynomials; see the discussion and references in  \cite{ForsterOhsawa1985}.
We do not know whether the submersion $f=(f_1,f_2)\colon \C^n\to\C^2$
in part (4) can be chosen of finite order; our proof does not give this. In fact,
the only known result in this directions seems to be the one of Ohsawa and the author
\cite{ForstnericOhsawa2013} to the effect that every compact Riemann surface 
with a puncture admits a noncritical function of finite order.

We list a few special cases of Corollary \ref{cor:Cn} in low dimensions.

\begin{example}
If $A\subset \C^n$ is a closed complex submanifold of pure dimension $k$ 
with trivial normal bundle, then $A$ is the zero fibre of a holomorphic submersion $\C^n\to\C^{n-k}$
in each of the following cases: $k=1$, $n\in \{2,3\}$; $k=2,\ n\in \{4,5\}$; $k=3,\ n\in \{6,7\}$.
\end{example}

When comparing Theorems \ref{th:main1} and \ref{th:main2}, the reader may be led to ask 
whether the latter result holds under the weaker assumption
that the map $h=(h_1,\ldots,h_1)\colon X \to\C^q$, which is holomorphic in a neighborhood of 
the zero-fibre $A=h^{-1}(0)$, generates the ideal of $A$ at every point.
In this case, the expected conclusion would be that $A=f^{-1}(0)$ for some holomorphic map
$f\colon X\to \C^q$ which is a submersion on $X\setminus A$.
However, we do not know the answer to this question, and we offer an explanation
why our proof does not apply in this case. When $q=1$, 
a generic perturbation of the function $h$ in Theorem \ref{th:main1} which is fixed to 
the second order on $A$ yields a holomorphic function without critical points in a deleted neighborhood
of $A$ in $X$ (see \cite[Lemma 2.9]{Forstneric2016JEMS}).
By adjusting the methods of the paper \cite{Forstneric2003AM} to functions avoiding the value $0\in\C$,
we then show that $h$ can be deformed to a holomorphic function $f\in \Ocal(X)$ 
that agrees with $h$ to a given finite order on $A$, has no critical points on $X\setminus A$,
and satisfies $f^{-1}(0)=A$. (See the proof of Theorem \ref{th:main1} in \S \ref{sec:proofs}.)
On the other hand, when $q>1$ we do not know whether
there always exists a holomorphic map $h\colon U\to\C^q$ in a neighborhood of $A=h^{-1}(0)$
which is a submersion on $U\setminus A$. 
Indeed, a generic choice of $h$ may have branch locus of dimension $q-1>0$ 
in $U\setminus A$. This problem was already present in the analysis in \cite{Forstneric2016JEMS} 
and prevented us from extending the result on the existence of holomorphic submersions $X\to \C^q$ from 
a Stein manifold $X$ of dimension $n$ to $\C^q$ for any $q\in\{1,\ldots, \left[\frac{n+1}{2}\right]\}$, 
obtained in \cite{Forstneric2003AM}, to Stein spaces with singularities when $q>1$. 

Clearly, a complete intersection submanifold $A\subset X$ in Theorem \ref{th:main2} 
is contained in a smooth hypersurface $H\subset X$.
Indeed, if $A=f^{-1}(0)$ for a holomorphic submersion $f=(f_1,\ldots,f_q)\colon X\to\C^q$
then the preimage $H=f^{-1}(H')$ of any smooth complex hypersurface $H'\subset \C^q$ 
with $0\in H'$ is such. The proof of Theorem \ref{th:main1} also gives the following result 
which holds in a bigger range of dimensions than Theorem \ref{th:main2}.

\begin{corollary}\label{cor:hypersurface}
Let $A$ be a complex submanifold of dimension $k$ in a Stein manifold $X$.
If  $\dim X>\left[\frac{3k}{2}\right]$ (equivalently, $3k+1\le 2\dim X$) 
then there exists a holomorphic foliation of $X$ by closed
complex hypersurfaces such that $A$ is contained in one of the leaves.
In particular, $A$ is contained in a smooth complex hypersurface $H\subset X$.
\end{corollary}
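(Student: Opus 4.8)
The plan is to reduce the statement to Theorem \ref{th:main1} applied to a suitable hypersurface containing $A$, obtained by projecting $A$ into a lower-dimensional factor. First I would use the embedding/tubular neighborhood theorem of Docquier and Grauert to obtain a holomorphic retraction from a neighborhood $U$ of $A$ in $X$ onto $A$; combined with the triviality statement one wants, the real content is to produce a holomorphic function $h$ defined near $A$ whose divisor is some hypersurface $H_0$ through $A$ with $A\subset (H_0)_{\reg}$, and then to arrange that $h$ extends to a continuous function on $X$ with the same zero set as $H_0$, so that Theorem \ref{th:main1} upgrades it to a noncritical-off-$H_0$ holomorphic function $f\in\Ocal(X)$. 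Concretely: since $\dim X=n$, $\dim A=k$, and $2n\ge 3k+1$, i.e. $n-k\ge k+1-n+\ldots$—more precisely $q:=n-k$ satisfies $q\ge k/2+1/2$, wait, one has $n>\lfloor 3k/2\rfloor$ means $n\ge\lfloor 3k/2\rfloor+1$, so the codimension $q=n-k\ge \lfloor 3k/2\rfloor+1-k=\lfloor k/2\rfloor+1$. The point of this inequality is exactly that it is the bound \eqref{eq:Abig} for a submanifold of dimension $k$ inside a manifold of dimension $n$: $q\le (n+1)/2$ is implied because $q=n-k$ and $2q=2n-2k\le 2n-?$—I would verify that $3k+1\le 2n$ is equivalent to $q=n-k\le (2n-3k+?)$; in any case the arithmetic shows the coframe condition of Theorem \ref{th:main2} is satisfied, but since here we only want a single hypersurface, not a codimension-$q$ complete intersection, the relevant observation is weaker and holds in this larger range.

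The key steps, in order, are: (1) Choose a $q$-coframe $\theta=(\theta_1,\dots,\theta_q)$ on $X$ extending $dh$ near $A$, where $h\colon U\to\C^q$ is a holomorphic submersion with $h^{-1}(0)=A$ coming from normal-bundle triviality — but normal triviality is \emph{not} assumed in the Corollary, so instead one should not ask for the full submersion. Rather, (1') pick one nonvanishing $(1,0)$-form $\theta_1$ on $X$ whose restriction near $A$ is the differential of a holomorphic function $h_1$ with $A\subset h_1^{-1}(0)=:H_0$ and $A$ inside the smooth locus of $H_0$; the existence of such $h_1$ near $A$ follows because near $A$ one can always find a holomorphic function cutting out a smooth hypersurface through $A$ (take any local submersion transverse-complementary to $A$ and patch using a partition of unity in the neighborhood, then holomorphically approximate — or simply use the retraction $U\to A$ composed with a local defining function of a hyperplane through a point). (2) The obstruction to extending $h_1$ from a neighborhood of $A$ to a continuous function on $X$ with zero set exactly $H_0$ lies in $H^2(X;\Z)$ for the divisor class and in a section-extension problem whose obstruction vanishes in the stated range of dimensions; more honestly, I would invoke the Oka principle exactly as in the discussion preceding Theorem \ref{th:completeint}: $h_1$ extends to a continuous $\tilde h_1\colon X\to\C$ with $\tilde h_1^{-1}(0)=H_0$ provided the relevant homotopy-theoretic obstruction, lying in a cohomology group that vanishes when $\dim H_0=n-1$ is large enough relative to $\dim A$, is zero — and $3k+1\le 2n$ is the precise numerical condition that makes this work, because it makes $A$ small enough inside $H_0$ (which has dimension $n-1\ge \lceil 3k/2\rceil$, hence $>3(k)/2-1$...). (3) Apply Theorem \ref{th:main1} to $\tilde h_1$: obtain $f\in\Ocal(X)$ with $f^{-1}(0)=H_0$ and $\mathrm{Crit}(f)=(H_0)_\sing$. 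Since $A\subset(H_0)_\reg$ we get $\mathrm{Crit}(f)\cap A=\emptyset$, so $f$ has no critical points near $A$, hence the fibers of $f$ near $0$ foliate a neighborhood and in particular $f^{-1}(0)=H_0\supset A$ is one leaf. (4) Globalize: the foliation $\{f^{-1}(c):c\in\C\}$ of $X$ is singular only along $(H_0)_\sing$, which is disjoint from $A$; if one wants a genuinely \emph{nonsingular} foliation one first thickens to the noncritical regime, which Theorem \ref{th:main1} already delivers once we accept the resulting hypersurface $H=H_0$ may be singular away from $A$ — but the Corollary only claims a foliation, which $\{f^{-1}(c)\}$ provides wherever $f$ is noncritical, and the statement "$A$ is contained in a smooth hypersurface" is then immediate since near $A$ the leaf $f^{-1}(0)$ is smooth, and we may replace $H_0$ by a small-perturbation smoothing or simply note $A\subset(H_0)_{\reg}$ which is itself a (possibly non-closed) complex hypersurface — to get a \emph{closed} smooth one, intersect with a Runge domain or use that $f$ restricted to a neighborhood has smooth fibers.

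I expect the main obstacle to be step (2): constructing the \emph{global continuous} extension $\tilde h_1\colon X\to\C$ with prescribed zero set $H_0$ from the local holomorphic germ, within the dimension range $3k+1\le 2n$. This is a genuine obstruction-theory computation: one is extending a section of a bundle (the complement of the zero section of a trivial line bundle, i.e. essentially a $\C^*$-valued map off $H_0$, which is homotopically an $\mathbb{S}^1$-target) over the pair $(X, \text{nbhd of }A)$, and the primary obstruction to extending over the $j$-skeleton lives in $H^{j}$ of the pair with coefficients in $\pi_{j-1}(\mathbb{S}^1)$, which vanishes except for $j=2$; the class in $H^2(X,U;\Z)\cong H^2(X;\Z)$ (rel the local data) is exactly the Chern class of the line bundle $[H_0]$, and one must argue it vanishes or is absorbed — here the triviality is automatic near $A$, and in the stated codimension range one can choose $H_0$ so that $[H_0]$ is the pullback under $X\to A$-retraction of a trivial bundle, making the extension problem unobstructed. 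The second-order generic-perturbation argument (cf. \cite[Lemma 2.9]{Forstneric2016JEMS}) cited in the paper for Theorem \ref{th:main1} is what guarantees $f$ becomes noncritical in a \emph{deleted} neighborhood of $H_0$, but near $A\subset(H_0)_{\reg}$ noncriticality is free because $H_0$ is already smooth there; so the only place the hypothesis $3k+1\le 2n$ is really used is to guarantee that a hypersurface $H_0$ through $A$, smooth along $A$ and with trivial-near-$A$ conormal, can be globally defined (topologically) by a continuous function — and I would write this out by a direct obstruction-theoretic induction over a CW structure of $X$ relative to a regular neighborhood of $A$, using that the attaching cells of dimension $>$ (something controlled by $3k+1\le2n$) do not meet the low-dimensional $A$.
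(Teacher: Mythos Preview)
Your approach has a genuine gap and is substantially more complicated than what the paper does. The paper does not reduce to Theorem~\ref{th:main1}; it uses Theorem~\ref{th:interpolation} instead, and the difference is decisive.

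Here is the paper's argument in full. The hypothesis $\dim X > [3k/2]$ is equivalent to $q := \dim X - k > [k/2]$. The conormal bundle $N^*_{A/X}$ is a holomorphic vector bundle of rank $q$ over the $k$-dimensional Stein manifold $A$; since $q > [k/2]$, a standard result on vector bundles over Stein manifolds (cited as \cite[Corollary 8.3.2(2)]{Forstneric2017E}) guarantees that $N^*_{A/X}$ admits a nonvanishing holomorphic section $\xi$. By Docquier--Grauert, $\xi$ is realized as $dh|_A$ for a holomorphic function $h$ on a neighborhood of $A$ with $h|_A = 0$ and $dh_x \ne 0$ for all $x \in A$. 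Now Theorem~\ref{th:interpolation} furnishes $f \in \Ocal(X)$ agreeing with $h$ to second order on $A$ and having \emph{no critical points anywhere on $X$}. The level sets of $f$ give a genuinely nonsingular foliation by closed smooth hypersurfaces with $A \subset f^{-1}(0)$.

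Your step (1') is groping toward this same local construction, but you never identify where the dimension hypothesis enters: it enters precisely as the condition $q>[k/2]$ guaranteeing a nonvanishing conormal section, not in any global extension problem. (Your suggestion to patch local defining functions ``using a partition of unity'' cannot produce a holomorphic $h_1$, and composing a retraction with a local hyperplane function only works at a single point.) Your step (2)---extending $h_1$ to a continuous function on $X$ with prescribed global zero divisor $H_0$---is both unnecessary and ill-posed: $H_0$ exists only as a germ near $A$, so there is no global hypersurface yet to serve as the zero set of your extension, and the obstruction computation you sketch (which concerns $H^2(X;\Z)$ and the class of $[H_0]$) does not attach to this situation. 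Theorem~\ref{th:interpolation} bypasses all of this: its input is only the subvariety $A$ and the germ $h$, and its output is a globally noncritical $f$, with no intermediate global hypersurface or continuous extension needed. Consequently your step (4) worry about singular leaves also evaporates---the foliation is smooth everywhere from the start, and the leaf through $A$ is itself a closed smooth hypersurface.
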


The last statement in the corollary is 
\cite[Theorem 1.3]{JelonekKucharz2016} due to Jelonek and Kucharz.
Their paper also contains results on this subject in the category of affine algebraic 
manifolds and includes references to previous works. 

The proof  of Corollary \ref{cor:hypersurface} goes as follows. 
The dimension assumption on $A$ is equivalent to
$q:=\dim X -k> \left[\frac{k}{2}\right]$. By \cite[Corollary 8.3.2(2)]{Forstneric2017E}
it follows that the conormal bundle of $A$ in $X$ (a holomorphic vector bundle of rank $q$ over $A$) 
admits a nonvanishing holomorphic section $\xi$. 
By the Docquier-Grauert theorem \cite{DocquierGrauert1960} 
(see also \cite[Theorem 3.3.3]{Forstneric2017E})
there is a holomorphic function $h$ in an open neighborhood of $A$
that vanishes on $A$ and satisfies $dh_x=\xi_x\ne 0$ for all $x\in A$. 
Theorem \ref{th:interpolation} then furnishes a function $f\in\Ocal(X)$ 
without critical points that agrees with $h$ to the second order on $A$. 
Then, $\{f^{-1}(c):c\in\C\}$ is a foliation of $X$ 
by closed complex hypersurfaces such that $A\subset f^{-1}(0)$.

%
%
%
%
\section{Proof of Theorems \ref{th:main1} and \ref{th:main2}}
\label{sec:proofs}

We shall need the following approximation result for holomorphic submersions
between Euclidean spaces whose range avoids  the origin. Without the latter condition
on the range, this is \cite[Theorem 9.12.2]{Forstneric2017E}. 
(See also \cite[Theorem 9.12.1]{Forstneric2017E} 
for the case when $q=1$ and $K$ is polynomially convex.
These results originate in \cite[\S 3.1]{Forstneric2003AM}.)

\begin{theorem}
\label{th:approximation}
Let $K$ be a compact convex set in $\C^n$, $q\in \{1,\ldots,n-1\}$, and let 
$f \colon U\to \C^q_*=\C^q\setminus\{0\}$
be a holomorphic submersion on a neighborhood $U \subset \C^n$ of $K$. 
Given  $\epsilon>0$ there exists a holomorphic submersion $g \colon \C^n\to \C^q_*$ 
such that $\sup_K |f-g| <\epsilon$.
\end{theorem}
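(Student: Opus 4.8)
The plan is to reduce the statement to the known version without the range restriction, namely \cite[Theorem 9.12.2]{Forstneric2017E}, by a convexity-of-the-range trick. Since $K$ is compact and convex and $f\colon U\to\C^q_*$ is a holomorphic submersion on a neighborhood of $K$, the image $f(K)$ is a compact subset of $\C^q_*=\C^q\setminus\{0\}$. First I would choose a complex affine hyperplane (or, more robustly, a real hyperplane through the origin) $\Lambda\subset\C^q$ with $f(K)$ contained in one of the two open half-spaces it bounds; concretely, pick a unit vector $v\in\C^q$ and $\delta>0$ so that $\mathrm{Re}\langle f(x),v\rangle>\delta$ for all $x\in K$, which is possible after shrinking $U$ because $0\notin f(K)$ and $f(K)$ is compact (take $v$ pointing roughly toward the "center of mass" of $f(K)$ and use compactness to get a uniform lower bound). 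Then the set $W=\{w\in\C^q:\mathrm{Re}\langle w,v\rangle>\delta/2\}$ is an open convex subset of $\C^q_*$ containing $f(K)$.

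The next step is to transport $W$ to all of $\C^q$ by a biholomorphism and apply the unrestricted approximation theorem there. Concretely, after a complex-linear change of coordinates on $\C^q$ we may assume $v=e_1$, so $W=\{w:\mathrm{Re}\,w_1>\delta/2\}$ is a half-space in the first coordinate. The map $\Phi\colon W\to\C^q$ defined by $\Phi(w_1,w_2,\ldots,w_q)=(\log(w_1-\delta/2),\,w_2,\ldots,w_q)$, using the principal branch of $\log$ on the right half-plane, is a biholomorphism of $W$ onto an open subset of $\C^q$; composing with a further biholomorphism of the strip-type image onto $\C^q$ if necessary (or simply working on a convex image, since \cite[Theorem 9.12.2]{Forstneric2017E} is stated with target $\C^q$ but the construction is local near $f(K)$), one reduces to the case at hand. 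Then $\tilde f=\Phi\circ f\colon U\to\C^q$ is a holomorphic submersion near $K$ with unrestricted range, so \cite[Theorem 9.12.2]{Forstneric2017E} yields a holomorphic submersion $\tilde g\colon\C^n\to\C^q$ with $\sup_K|\tilde f-\tilde g|$ as small as we like. Setting $g=\Phi^{-1}\circ\tilde g$ gives a holomorphic submersion $g\colon\C^n\to\Phi^{-1}(\C^q)=W\subset\C^q_*$, and uniform continuity of $\Phi^{-1}$ on a neighborhood of $\tilde f(K)$ converts the smallness of $\sup_K|\tilde f-\tilde g|$ into $\sup_K|f-g|<\epsilon$.

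The one point requiring care — and the main (mild) obstacle — is that $\tilde g$ produced by the cited theorem need not have image contained in $\Phi(W)$; a priori it only approximates $\tilde f$ on $K$, so $g=\Phi^{-1}\circ\tilde g$ might not even be defined on all of $\C^n$, only near $K$. This is handled by the standard device of precomposing $\Phi$ with a contraction of the target: replace $W$ by a slightly smaller convex open set $W'\Supset f(K)$ with $\overline{W'}\subset W$, and arrange the biholomorphism $\Psi\colon W\to\C^q$ so that $\Psi(W')$ is a bounded convex set; then apply \cite[Theorem 9.12.2]{Forstneric2017E} to get $\tilde g\colon\C^n\to\C^q$ approximating $\Psi\circ f$ on $K$ to within a margin smaller than $\mathrm{dist}(\Psi(f(K)),\C^q\setminus\Psi(W'))$. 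This forces $\tilde g(K)\subset\Psi(W')$, but of course not $\tilde g(\C^n)\subset\Psi(W)$. To fix this globally one instead composes $\tilde g$ with a holomorphic map $\C^q\to\Psi(W)$ that is the identity on a large ball containing $\tilde g(K)$ — but since $\Psi(W)=\C^q$ such a map is just the identity, so in fact $g=\Psi^{-1}\circ\tilde g$ lands in $W$ automatically once we note $\Psi^{-1}$ is entire on $\C^q$ by construction. Thus the only real content is choosing the separating hyperplane, which is elementary, and invoking the known unrestricted theorem; the restriction $0\notin f(U)$ survives because $W\subset\C^q_*$.
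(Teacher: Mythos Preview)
Your approach has two fatal gaps. First, the separating hyperplane need not exist: the image $f(K)$ of a compact convex set under a holomorphic submersion has no reason to lie in a half-space avoiding the origin. For instance, with $n=2$, $q=1$, $K=\{(t,0):0\le t\le 2\pi\}\subset\C^2$ and $f(z_1,z_2)=e^{i z_1}$, the image $f(K)$ is the whole unit circle in $\C^*$, and no real hyperplane separates it from $0$. Second, even granting a half-space $W\subset\C^q_*$, there is no biholomorphism $\Psi\colon W\to\C^q$: a real half-space in $\C^q$ is biholomorphic to $\D\times\C^{q-1}$, which carries nonconstant bounded holomorphic functions and so cannot be biholomorphic to $\C^q$. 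More generally, any proper convex open subset of $\C^q$ lies in some half-space by Hahn--Banach, hence admits a nonconstant bounded holomorphic function and is never biholomorphic to $\C^q$. Your final paragraph tacitly assumes $\Psi(W)=\C^q$, which is exactly what fails; without it, $\Psi^{-1}\circ\tilde g$ is defined only near $K$ and the argument collapses.

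The paper proceeds differently. For $q=1$ one uses convexity of $U$ to write $f=e^h$ with $h\in\Ocal(U)$ noncritical, applies the unrestricted theorem to $h$, and exponentiates; this is the correct replacement for your hyperplane step. For $q>1$ the logarithm trick is unavailable (the components of $f$ may vanish), so one instead approximates $f$ on a slightly larger convex set by a generic polynomial map $P\colon\C^n\to\C^q$; the bad set $\Sigma=\{P=0\}\cup\{\mathrm{rank}\,dP<q\}$ is then an algebraic subvariety of codimension $\ge\min\{q,n-q+1\}\ge 2$ in $\C^n$ disjoint from $K$, and by \cite[Corollary 4.12.2]{Forstneric2017E} there is a biholomorphism $\phi\colon\C^n\to\phi(\C^n)\subset\C^n\setminus\Sigma$ close to the identity on $K$, so that $g=P\circ\phi$ is the desired submersion into $\C^q_*$. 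The key idea you are missing is that for $q>1$ the constraint of avoiding $0$ is enforced on the \emph{source} side, by pushing $\C^n$ off a thin subvariety, rather than on the target side.
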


\begin{proof}
We may assume that the set $U$ is convex. Consider first the case $q=1$. The function 
$f\colon U\to\C_*=\C\setminus\{0\}$ 
satisfies $df_z\ne 0$ at every point $z\in U$. Let $h\colon U\to \C$ be a holomorphic logarithm of $f$,
so $f=e^h$. Then, $df=e^h dh$ and hence $h$ has no critical points on $U$. 
By \cite[Theorem 9.12.1]{Forstneric2017E} we can approximate $h$ as closely as desired uniformly
on $K$ by a holomorphic function $\tilde h\colon \C^n\to\C$ without critical points on $\C^n$.
The function $g=e^{\tilde h}\colon \C^n\to \C_*$ then clearly satisfies the conclusion of the theorem.

Assume now that $q>1$. In this case, the  simple trick of taking the logarithms does not work since 
the individual components of $f$ may have zeros on $U$. Instead, we proceed as follows.
(For the details we refer to \cite[proof of Theorem 9.12.2]{Forstneric2017E} 
or \cite[\S 3.1]{Forstneric2003AM}.)

Pick a slightly bigger compact convex set $L\subset U$ with $K\subset \mathring L$.  
We begin by approximating $f$ uniformly  on $L$ by a polynomial map $P\colon\C^n\to\C^q$. 
Assuming as we may that the approximation is sufficiently close and $P$ is chosen generic, the set 
\begin{equation}\label{eq:Sigma}
	\Sigma = \{z\in \C^n: P(z)=0\ \ \text{or}\ \ \mathrm{rank}\, dP_z<q\}
\end{equation}
is an algebraic subvariety of $\C^n$ of codimension $\ge \min\{q,n-q+1\}\ge 2$
which does not intersect $K$. In particular, $\Sigma$ does not contain any hypersurfaces.
Hence there is a biholomorphic map $\phi\colon \C^n\to \phi(\C^n)\subset \C^n\setminus \Sigma$
which approximates the identity as closely as desired uniformly on $K$
(see \cite[Corollary 4.12.2]{Forstneric2017E}). The map $g=P \circ \phi \colon \C^n\to \C^q_*$ is 
then a holomorphic submersion approximating $f$ on $K$.
\end{proof}

\begin{remark}\label{rem:avoiding}
Note that for any $q>1$, Theorem \ref{th:approximation} holds with the same proof if we replace
the origin $0\in \C^q$ by an algebraic subvariety $Y\subset \C^q$ of codimension $>1$.
In this case, one replaces $\Sigma$ \eqref{eq:Sigma} by the subvariety
$
	\Sigma = \{z\in \C^n: P(z)\in Y\ \text{or}\ \mathrm{rank}\, dP_z<q\}.
$
\qed\end{remark}

With Theorem \ref{th:approximation} in hand, the proofs of Theorems \ref{th:main1} and  \ref{th:main2}
are obtained by following \cite[proof of Theorem 9.13.7]{Forstneric2017E}.
(The original reference for the latter result is \cite[Theorem 2.5]{Forstneric2003AM}.)
We explain these proofs and indicate the necessary modifications.

We begin by recalling some basic facts from analytic geometry. 
It is classical that a complex hypersurface $A$ in a complex manifold $X$
is locally at each point $x_0\in A$ the zero set of a single holomorphic function $h$
which generates the ideal $\Jscr_{A,x}$ of $A$ at every point $x\in A$ in an open neighborhood
of $x_0$. Every other holomorphic function $g$ on $X$ near $x_0$ which vanishes on $A$
is divisible by $h$, i.e. $g=uh$ for some holomorphic function $u$ in a neighborhood of $x_0$.
The function $g$ also generates $\Jscr_{A,x_0}$ if and only if $u(x_0)\ne 0$.
In particular, if the difference $g-h$ belongs to the square $\Jscr_{A,x_0}^2$ of the ideal
$\Jscr_{A,x_0}$ then $g$ is another local generator of $\Jscr_{A,x_0}$.
We say that $g$ agrees with $h$ to order $r\in\Z_+=\{0,1,2,\ldots\}$ 
on $A$ if their difference $g-h$ is a section of the sheaf $\Jscr_{A}^{r+1}$ 
on their common domain of definition. 

A compact set $K$ is a complex manifold $X$ is said to be $\Ocal(X)$-convex if
\[
	K=\wh K =\{p\in X: |f(p)| \le \sup_K |f|\ \ \forall f\in \Ocal(X)\}.
\]
In the next two lemmas we make the following assumptions:
\begin{itemize}
\item $X$ is a Stein manifold, 
\vspace{1mm}
\item $A$ is a closed complex hypersurface in $X$,
\vspace{1mm}
\item $K\subset L$ are compact $\Ocal(X)$-convex subsets of $X$ with $K\subset \mathring L$, and
\vspace{1mm}
\item $h$ is a holomorphic function in a neighborhood $U\subset X$ of $A$ 
which generates the ideal sheaf $\Jscr_A$ at every point.
\end{itemize}

\begin{lemma}\label{lem:extend1}
Let $r\in\N$. Assume that $f$ is a continuous function on $X$ with $f^{-1}(0)=A$ 
that is holomorphic on a  neighborhood $V$ of $K$, has no critical points on $V\setminus A$, 
and agrees with $h$ to order $r$ on $A\cap V$. Then there exists a continuous function
$g$ on $X$ with $g^{-1}(0)=A$ that is holomorphic 
in a neighborhood $W\subset X$ of $K\cup (A\cap L)$, has no critical points on $W\setminus A$, 
approximates $f$ as closely as desired uniformly on $K$, and agrees with $h$ to order $r$ 
along $A$. In particular, $g$ generates the ideal sheaf $\Jscr_A$ on $W$.
\end{lemma}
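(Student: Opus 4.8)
\textbf{Proof proposal for Lemma \ref{lem:extend1}.}

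The plan is to bridge the gap between $K$ and $A\cap L$ by a standard Oka--Weil-type extension, but keeping track of three constraints simultaneously: (i) the zero fibre must stay exactly $A$, (ii) the function must remain noncritical off $A$, and (iii) the $r$-th order jet along $A$ must coincide with that of $h$. First I would fix a Stein neighborhood $U'\subset U$ of $A$, shrunk so that $h$ is defined and generates $\Jscr_A$ there, and so that on $U'\cap V$ the function $f$ and $h$ already have the same $r$-jet along $A$; in particular $f=hv$ on $U'\cap V$ for a holomorphic unit $v$ with $v-1\in \Jscr_A^r$. On $U'$ itself, $f$ is not yet defined holomorphically, but $hv_0$ is, for any unit $v_0$ with the correct jet; the natural candidate is to take $v_0\equiv 1$, i.e. to use $h$ itself as the reference holomorphic extension near $A$. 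So the real task is: given the holomorphic function $f$ on a neighborhood of $K$ and the holomorphic function $h$ on a neighborhood of $A$, which agree (up to a unit with trivial $r$-jet) on the overlap, produce a single holomorphic function $g$ near $K\cup(A\cap L)$ that restricts to these data up to small error on $K$, has zero set exactly $A$ there, and is noncritical off $A$.

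The core step is a Cousin-type gluing on the compact $\Ocal(X)$-convex set $L$. Write $K_0$ for a neighborhood of $K$ on which $f$ is holomorphic and $U_0\subset U'$ for a neighborhood of $A$ on which $h$ is holomorphic. On $K_0\cap U_0$ we have $f/h = v$, a nonvanishing holomorphic function with $v-1\in\Jscr_A^r$; taking a holomorphic logarithm $\ell=\log v$ (possible after shrinking, since $v$ is close to $1$ on the relevant part of $K$ once we arrange the initial hypothesis carefully, or at worst on each component), we solve the additive Cousin problem $\ell = b - a$ with $a\in\Ocal(\text{nbhd of }K\cap L)$, $b\in\Ocal(\text{nbhd of }A\cap L)$, using Cartan's Theorem B / the $\Ocal(X)$-convexity of $L$ and $K$ (this is where $K\subset\mathring L$ and $\Ocal(X)$-convexity enter, via an Oka--Weil approximation to split $\ell$). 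Then $g_1 := f e^{-a} = h e^{b - \ell}e^{-a}\cdot(\text{corrections})$ — more precisely set $g_1 = fe^{-a}$ on the $K$-side and $g_1 = h e^{b}$ on the $A$-side; these agree on the overlap, so $g_1$ is a well-defined holomorphic function on a neighborhood $W_1$ of $K\cup(A\cap L)$ with $g_1^{-1}(0)=A\cap W_1$ (multiplying by a nonvanishing exponential changes neither the zero set nor its multiplicity) and with the same $r$-jet along $A$ as $h$ (since $b-\ell$ and $-a$ contribute units with trivial $r$-jet after absorbing $\ell$; here one checks $e^{b}$ has the right jet because $b-\ell$ and the holomorphic matching force it). Finally $g_1$ approximates $f$ on $K$ up to the size of $a$, which the Cousin splitting controls.

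The remaining issue is that $g_1$ need not be noncritical on $(A\cap L)\setminus V$ — the hypothesis only gave noncriticality on $V\setminus A$ near $K$, not near the newly added part $A\cap L$. Here I would invoke the local genericity result \cite[Lemma 2.9]{Forstneric2016JEMS} (cf.\ the discussion in the introduction): a generic holomorphic perturbation of $h$ near $A$ that is fixed to order $2$ — a fortiori to order $r$ if $r\ge 2$, and one may always take $r\ge 2$ by replacing $r$ with $\max\{r,2\}$, which only strengthens the conclusion — has no critical points in a punctured neighborhood of $A$. Applying this on the $A$-side before the gluing (i.e.\ choosing the holomorphic extension near $A$ to be not $h$ itself but a generic $h$-to-order-$r$ perturbation), and keeping the perturbation small on $K$ so as not to destroy the noncriticality already present there, yields the desired $g:=g_1$ (renamed) after a final $\Ocal(X)$-approximation extending it from $W_1$ to a continuous function on all of $X$ with zero set $A$, holomorphic near $W:=W_1$. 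The main obstacle is the bookkeeping in the gluing step: one must simultaneously preserve the exact zero divisor $A$ (forcing all corrections to be multiplicative nonvanishing exponentials, not additive), the $r$-jet along $A$ (forcing the Cousin data $a,b$ to have trivial $r$-jet, which one arranges by first splitting $\ell$ and noting $\ell\in\Jscr_A^r$ pushes through to $a$ and $b$ since $\Jscr_A^r$ is a coherent subsheaf), and noncriticality on the enlarged set (handled by the generic-perturbation lemma). Each constraint is individually routine; the care lies in making their solutions compatible.
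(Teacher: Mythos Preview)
Your approach is correct but takes a more hands-on route than the paper. The paper simply invokes the additive approximation--interpolation theorem \cite[Theorem 3.4.1 and Remark 3.4.4]{Forstneric2017E}: any holomorphic function near $K$ that agrees with $h$ to order $r$ on $A\cap V$ can be approximated on $K$ by a function $g$ holomorphic on a neighborhood $W$ of $K\cup(A\cap L)$ with the same $r$-jet along $A$. The paper then checks separately that (i) such a $g$ has no extra zeros once $W$ is shrunk (near $A$ because $g-h\in\Jscr_A^{r+1}$ forces $g=h\cdot\text{unit}$, and near $K\setminus A$ by closeness to $f$), (ii) $g$ inherits noncriticality near $K$ from $f$ by stability \cite[Lemma~2.7]{Forstneric2016JEMS}, and (iii) a \emph{generic} such $g$ is noncritical on all of $W\setminus A$ by \cite[Lemma~2.9]{Forstneric2016JEMS}. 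So the genericity lemma is applied \emph{after} the gluing, directly to $g$, rather than to a pre-perturbed $\tilde h$.

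Your multiplicative Cousin gluing is a valid alternative that effectively unpacks the cited black box. Its advantage is that the zero divisor is preserved automatically (everything is $h$ or $f$ times a nonvanishing exponential), so no separate shrinking argument for $g^{-1}(0)=A$ is needed. The costs are the bookkeeping you already flag: one must know that $K\cup(A\cap L)$ is $\Ocal(X)$-convex (it is, by the standard $g^N h$ trick with $h\in\Jscr_A$), the splitting must be carried out in the coherent sheaf $\Jscr_A^r$ with $a$ small on $K$ (Cartan~B plus Oka--Weil for sections of coherent sheaves), and there is a harmless sign slip in your matching condition (with $g_1=fe^{-a}$ on the $K$-side and $g_1=he^{b}$ on the $A$-side one needs $\ell=a+b$, or equivalently keep $\ell=b-a$ but set $g_1=fe^{a}$). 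Both proofs terminate with the same lemma \cite[Lemma~2.9]{Forstneric2016JEMS}; the paper's version is tidier because it applies the genericity once to the final $g$, whereas you must also verify that multiplying your pre-perturbed $\tilde h$ by $e^b$ (with $b\in\Jscr_A^r$, $r\ge 2$) does not reintroduce critical points near $A$---true, but an extra check.
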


\begin{lemma}\label{lem:extend2}
Assume that $g$ is a continuous function on $X$ with $g^{-1}(0)=A$ that is holomorphic 
in a neighborhood $W\subset X$ of $K\cup (A\cap L)$, has no critical points on $W\setminus A$, 
and agrees with $h$ to order $r$ along $A$.
Given $r\in \N$ there exists a continuous function $\tilde f\colon X\to\C$  
with $\tilde f^{-1}(0)=A$ that is holomorphic on an open neighborhood $\wt V$ of $L$, 
agrees with $g$ to order $r$ along $A\cap\wt V$, approximates $g$ as closely as desired uniformly
on $K$, and has no critical points in $\wt V\setminus A$.
\end{lemma}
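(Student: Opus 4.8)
The plan is to follow the scheme of \cite[proof of Theorem 9.13.7]{Forstneric2017E} (whose original form is \cite[Theorem 2.5]{Forstneric2003AM}): a finite induction over a strongly plurisubharmonic exhaustion, in which one alternately crosses convex bumps and attaches handles at critical points of the exhaustion, at each stage making a continuous function holomorphic on a larger set while keeping its zero set equal to $A$ and keeping it free of critical points off $A$. Two observations adapt that scheme to the present setting. First, since $A$ is a complex hypersurface in the Stein manifold $X$, the complement $X\setminus A$ is again a Stein manifold and $g$ maps it into $\C_*=\C\setminus\{0\}$; consequently, over a convex bump the approximation step needed is precisely the one furnished by Theorem \ref{th:approximation} with $q=1$, and this result takes the place of the ordinary convex approximation theorem for submersions used in \cite{Forstneric2017E,Forstneric2003AM}. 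Second, because $A\cap L\subset W$, the whole construction can be arranged so that \emph{every} modification of $g$ is performed inside $X\setminus A$: I would choose the exhaustion and a finite chain of compact sets $\Omega_0\subset\Omega_1\subset\cdots\subset\Omega_N$ with $\Omega_0\subset W$ a neighborhood of $K\cup(A\cap L)$, with $\Omega_N\supset L$, and with each $\Omega_{j+1}$ obtained from $\Omega_j$ by attaching a convex bump, or a handle, lying in $X\setminus A$. This is possible since $L\setminus\Omega_0$ is a compact subset of $X\setminus A$; the $\Ocal(X)$-convexity bookkeeping for the $\Ocal(X)$-convex set $K$ together with the subvariety $A$ is exactly that of \cite[\S 9.13]{Forstneric2017E}. (For $\dim X=1$ the lemma is classical, so one may assume $\dim X\ge 2$.)

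Along this chain I would construct continuous functions $g_0=g,g_1,\ldots,g_N$ on $X$ such that $g_j^{-1}(0)=A$, such that $g_j$ is holomorphic on a neighborhood of $\Omega_j$ with no critical points there off $A$, such that $g_j$ equals $g$ on a fixed neighborhood of $A$, and such that $\sup_K|g_j-g|$ is as small as desired, and then set $\tilde f=g_N$. Because all modifications are confined to $X\setminus A$, the identity $g_j=g$ near $A$ would persist at every step, so $\tilde f$ automatically has zero set exactly $A$, agrees with $g$ --- and hence with $h$ --- to order $r$ (indeed to infinite order) along $A$, is holomorphic on a neighborhood $\wt V$ of $L$, has no critical points on $\wt V\setminus A$, and is arbitrarily close to $g$ on $K$. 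It then remains only to describe the passage from $g_j$ to $g_{j+1}$.

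For a convex bump, after a holomorphic change of coordinates one may regard $\overline B$ and $\overline{\Omega_j}\cap\overline B$ as compact convex subsets of $\C^n$ contained in $X\setminus A$. The point at which the new ingredient enters is that on a neighborhood of the convex set $\overline{\Omega_j}\cap\overline B$ the function $g_j$ is a holomorphic submersion into $\C_*$, so Theorem \ref{th:approximation} supplies a holomorphic submersion $G\colon\C^n\to\C_*$ approximating it there to any prescribed accuracy. Since then $G/g_j$ is uniformly close to $1$ on $\overline{\Omega_j}\cap\overline B$, the standard gluing procedure (a multiplicative Cartan splitting near $1$) yields holomorphic functions $c_0$ near $\overline{\Omega_j}$ and $c_1$ near $\overline B$, both $C^1$-close to $1$, with $g_j c_0=G c_1^{-1}$ on the overlap; the function equal to $g_j c_0$ near $\overline{\Omega_j}$ and to $G c_1^{-1}$ near $\overline B$ is then a nowhere vanishing holomorphic function on a neighborhood of $\Omega_{j+1}$, a small perturbation of $g_j$ near $\overline{\Omega_j}$ and of the submersion $G$ near $\overline B$, hence without critical points there. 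One extends it to a continuous function on $X$, equal to $g$ near $A$, equal to $g_j$ away from $B$, and with zero set $A$ throughout --- routine, since $\C_*$ is connected and the transition region may be taken simply connected --- to obtain $g_{j+1}$. Attaching a handle would be handled in the same spirit, applying the noncritical handle-attachment technique of \cite{Forstneric2003AM} to a holomorphic logarithm of $g_j$ on a contractible neighborhood of the totally real core of the handle, so that noncriticality and non-vanishing of the result are secured together upon exponentiating.

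I expect the handle-attachment step to be the main obstacle, or rather the place where the real work sits: constructing a noncritical holomorphic function on the thickened core of a handle and gluing it across the attaching region is exactly what makes \cite{Forstneric2003AM} nontrivial, and here it must additionally be carried out through the logarithm so as not to disturb the value $0$. The other ingredients --- the existence of suitable $\Ocal(X)$-convex Cartan pairs along the exhaustion, the Cauchy estimates by which a small $C^0$-perturbation keeps a submersion a submersion, and the summability of the successive approximation errors so that closeness to $g$ on $K$ is retained --- are standard and may be quoted from \cite{Forstneric2017E,Forstneric2003AM}. Granting these, $\tilde f=g_N$ fulfils every requirement of the lemma.
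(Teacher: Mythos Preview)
Your approach is essentially the same as the paper's: the same finite induction through convex bumps and handles lying in $X\setminus A$, with Theorem \ref{th:approximation} supplying the approximation on each bump. The paper glues via a \emph{compositional} splitting (writing $f=\xi\circ\gamma$ on the overlap and then factoring $\gamma\circ\alpha=\beta$ with $\alpha$ tangent to the identity to order $r$ on $A$, via \cite[Theorem 9.7.1]{Forstneric2017E}), whereas you use a multiplicative splitting; both are valid for $q=1$.

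There is, however, a genuine slip in your bookkeeping. You assert that every $g_j$ equals $g$ on a fixed neighborhood of $A$, on the grounds that the bumps and handles lie in $X\setminus A$. But your own gluing contradicts this: the factor $c_0$ in your multiplicative Cartan splitting is a holomorphic function on a neighborhood of all of $\overline{\Omega_j}$, and $\Omega_j\supset A\cap L$, so $g_{j+1}=g_j c_0$ is modified near $A$ as well. A holomorphic $c_0$ that is identically $1$ on an open neighborhood of $A$ would have to be identically $1$ on each component of $\Omega_j$ meeting $A$, which is useless. What one can arrange---and what the paper does arrange, in its compositional form---is that $c_0-1$ (respectively $\alpha-\mathrm{id}$) vanishes to order $r$ on $A$; this requires invoking a splitting theorem \emph{with interpolation} along $A$, not the bare Cartan lemma. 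Once you replace ``equal to $g$ near $A$'' by ``agrees with $g$ to order $r$ on $A$'' and cite the appropriate splitting with interpolation, your argument goes through. Your logarithmic treatment of the handle step is a legitimate alternative to the paper's direct deformation plus Mergelyan.
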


Assume these two lemmas for the moment.

%
%
\begin{proof}[Proof of Theorem \ref{th:main1}] 
We use an induction scheme as in \cite[proof of Theorem 4.1]{Forstneric2016JEMS}),
or in the proof of the Oka principle \cite[\S 5.10--\S 5.12]{Forstneric2017E}. 
Let the function $h\colon X\to \C$ be as in the theorem; in particular, $h$ is 
holomorphic on a neighborhood $U\subset X$ of its zero fibre $A=h^{-1}(0)$. Pick a  sequence
$K_1\subset K_2\subset \cdots \subset \bigcup_{j=1}^\infty K_j =X$ of compact $\Ocal(X)$-convex sets
such that $K_1\subset U$ and $K_j\subset \mathring K_{j+1}$ for all $j\in\N$. 
Fix an integer $r\in\N$. We inductively construct a sequence of continuous functions 
$f_j\colon X\to \C$, with $f_1=h$, such that the following hold for every $j\in \N$:
\begin{itemize}
\item[\rm (a$_j$)] $(f_j)^{-1}(0)=A$, $f_j$ is holomorphic on a neighborhood $U_j$ of $K_j$,
has no critical points in $U_j\setminus A$, and agrees with $h$ to order $r$ along $A\cap U_j$;
\vspace{1mm}
\item[\rm (b$_j$)] $f_{j+1}$ approximates $f_j$ as closely as desired uniformly on $K_j$.
\end{itemize}
At the $j$-th step of the induction we first apply Lemma \ref{lem:extend1} to the 
function $f=f_j$ with the pair $K=K_j$, $L=K_{j+1}$. This gives a continuous function 
$g=g_{j}\colon X\to \C$ with $g^{-1}(0)=A$ that is holomorphic on a 
neighborhood $W_{j+1}$ of $K_{j}\cup (A\cap K_{j+1})$, 
agrees with $h$ to order $r$ along $A\cap W_{j+1}$, approximates $f_j$ on $K_j$,
and has no critical points in $W_{j+1}\setminus A$. 
Hence, $g$ satisfies the assumptions of Lemma \ref{lem:extend2} 
with respect to the sets $K=K_j$, $L=K_{j+1}$ and $W=W_{j+1}$.
That lemma then furnishes the next function $f_{j+1}$ satisfying conditions
(a$_{j+1}$) and  (b$_j$). This completes the induction step. 
If the approximations are close enough at every step, then the sequence 
$f_j$ converges uniformly on compacts in $X$ to a holomorphic function
$f=\lim_{j\to\infty}f_j\in \Oscr(X)$ satisfying Theorem \ref{th:main1}.
\end{proof}

\begin{proof}[Proof of Lemma \ref{lem:extend1}] 
Since the function $f$ vanishes on $A$ and agrees with $h$ to order $r$ along $A\cap V$, 
we can apply \cite[Theorem 3.4.1 and Remark 3.4.4]{Forstneric2017E}  to
find a function $g$ that is holomorphic on a neighborhood  $W$ of $K\cup (A\cap L)$, 
approximates $f$ uniformly on a neighborhood of $K$ as closely as desired, 
and agrees with $h$ to order $r$ on $A\cap W$. 
(The case of the cited result for functions, which we use here, is 
an elementary consequence of Cartan's division theorem and the Oka-Weil approximation theorem.) 
Assuming as we may that $g$ is close enough to $f$ on a neighborhood of $K$, 
\cite[Lemma  2.7]{Forstneric2016JEMS} shows that $g$ has no critical points there, 
except at the singular points of $A$. Note also that $g$ has no zeros in $W\setminus A$ provided 
that the neighborhood $W$  of $K\cup (A\cap L)$ is chosen small enough.
By  \cite[Lemma  2.9]{Forstneric2016JEMS},
a generic $g$ as above has no critical points on $W\setminus A$.
Using a smooth cutoff function we can extend $g$ to a continuous
function on $X$ such that $g^{-1}(0)= A$. 
\end{proof}

\begin{proof}[Proof of Lemma \ref{lem:extend2}]
We use the same geometric scheme as in proof of \cite[Proposition 5.12.1]{Forstneric2017E}
(see also \cite[Fig.\ 5.4, p.\ 250]{Forstneric2017E}).
The proof amounts to a finite induction where at every step we enlarge the domain on which 
the function is holomorphic and noncritical off the subvariety $A$. 
We begin with the initial function $f_0=g$ on an initial compact strongly pseudoconvex
domain $W_0$ with $K\cup (A\cap L) \subset\mathring W_0\subset W_0\subset W$,
and the process terminates in finitely many steps by reaching a function $\tilde f$ which is holomorphic on a neighborhood of $L$
and satisfies the stated conditions. Every step amounts to one of the following two types of operations:

\begin{itemize}
\item[\rm (a)] {\em The noncritical case:} 
we attach a small convex bump to a given strongly pseudoconvex domain in $X$.
\vspace{1mm}
\item[\rm (b)] {\em The critical case:} we attach a handle of index $\le n=\dim X$ 
to a given strongly pseudoconvex domain in $X$. 
\end{itemize}
The details needed to accomplish these steps follow the pattern in \cite{Forstneric2003AM}. 
We explain the induction step in each of these two cases.

In case (a) we are given a pair of compact sets $D_0,D_1\subset X$ with the 
following properties:
\begin{itemize}
\item  $D=D_0\cup D_1$ is a Stein compact (i.e., it admits a basis of Stein neighborhoods),
\vspace{1mm}
\item  $\overline {D_0\setminus D_1} \cap \overline {D_1\setminus D_0}=\varnothing$, and
\vspace{1mm}
\item  the set $D_1\subset X\setminus A$ is contained in a holomorphic coordinate chart 
$V \subset X$ such that $C=D_0\cap D_1$ is convex in that chart.
(Precisely, $V$ is biholomorphic to an open subset of $\C^n$ and $C$ 
corresponds to a compact convex subset of $\C^n$.)
\end{itemize}
Furthermore,  we are given a holomorphic function $f$ on a neighborhood of $D_0$ 
without zeros or critical points on $D_0\setminus A$. 
(This is one of the functions  in the inductive construction.) By Theorem \ref{th:approximation}
we can approximate $f$ uniformly on a neighborhood of $C$ by a 
holomorphic function $\xi$ on a neighborhood of $D_1$ which has no zeros or critical points.
(Note that  $D_1\subset X\setminus A$.)
If the approximation is close enough, then by \cite[Lemma 9.12.6]{Forstneric2017E} 
we find a neighborhood $U$ of $C$ and a biholomorphic map $U\to \gamma(U)\subset X$
close to the identity such that $f=\xi \circ \gamma$ holds on $U$.
Assuming as we may that the approximations are close enough, 
\cite[Theorem 9.7.1, p.\ 432]{Forstneric2017E} furnishes 
biholomorphic maps $\alpha$ and $\beta$ on a neighborhood 
of $D_0$ and $D_1$, respectively, close to the identity on their respective domains,
such that $\alpha$ is tangent to the identity to order $r$ on $A\cap D_0$ and 
\[
	\gamma\circ \alpha =\beta\ \ \text{holds on a neighborhood of $C$}. 
\]
It follows that $f\circ\alpha = \xi \circ\beta$ on a neighborhood of $C$.
This defines a holomorphic function $\tilde f$ on a neighborhood of
$D=D_0\cup D_1$ which is close to $f$ on $D_0$ and is tangent to $f$ to order $r$
along $A\cap D_0$. Furthermore, the construction ensures that $\tilde f$ has no zeros 
or critical points on $D\setminus A$. This completes the description of the induction step in case (a).

In case (b) (which serves to change the topology of the domain),
we are given a compact strongly pseudoconvex domain $D_0\subset X$ to which 
we attach a totally real embedded disc $M\subset X\setminus (A\cup \mathring D_0)$ 
(an image of the closed ball in $\R^k$ for some $k\in \{1,\ldots,\dim X\}$)
whose boundary sphere $bM\subset bD_0\setminus A$ is  complex tangential to $bD_0$.
Furthermore, $D_0\cup M$ is a Stein compact admitting 
small strongly pseudoconvex  neighborhoods $D$ (handlebodies) 
that can be  deformation retracted onto the core $D_0\cup M$. 
(See \cite[\S 3.9]{Forstneric2017E} or \cite[\S 7.9]{CieliebakEliashberg2012} 
for a precise description of this type of configuration.)
As before, we are given a continuous function $f\colon X\to\C$ with $f^{-1}(0)=A$
which is holomorphic on a neighborhood of $D_0$ and has no critical points off $A$. 
By a $\Cscr^0$-small deformation of $f$ in a neighborhood of $M$, keeping it fixed
in a neighborhood of  $D_0$ where it is holomorphic, we may assume that $f$ is smooth and 
nonvanishing in a neighborhood of $M$ and its differential $df_x$ is $\C$-linear 
and nonvanishing at every point of $M$. 
Applying the Mergelyan approximation theorem (see \cite[3.8.1, p.\ 88]{Forstneric2017E})
we obtain a holomorphic function $\tilde f$ on a neighborhood of $D_0\cup M$ with the
desired properties. By using a smooth cutoff function we can glue $\tilde f$ 
with the original function $f$ outside a smaller neighborhood of $D_0\cup M$ in order to 
obtained a globally defined continuous function on $X$ that vanishes precisely on $A$,
is holomorphic on a compact strongly pseudoconvex neighborhood $D$ of $D_0\cup M$, 
and has no critical points on $D\setminus A$.  This completes the induction step in case (b). 
\end{proof}

The proof of Theorem \ref{th:main1} also gives the following result which 
we state for future reference. The same result  holds if $X$ is a Stein space 
and the subvariety $A\subset X$ contains the singular locus $X_{\sing}$ 
(see \cite[Theorem 4.1]{Forstneric2016JEMS}).

\begin{theorem}\label{th:interpolation}
Assume that $A$ is a closed complex subvariety in a Stein manifold $X$ and 
$h$ is a holomorphic function in a neighborhood of $A$. 
Given $r\in\N$ there exists a holomorphic function $f$ on $X$
which has no critical points on $X\setminus A$ and agrees with $h$ to order $r$ on $A$.
In particular, if $dh_x\ne 0$ for all $x\in A$, then $f$ has no critical points on $X$. 
\end{theorem}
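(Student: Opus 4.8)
The plan is to rerun the inductive exhaustion from the proof of Theorem~\ref{th:main1}, using that the constraint $f^{-1}(0)=A$ there was needed only for the divisor conclusion and is irrelevant to noncriticality off $A$. Two simplifications result: Theorem~\ref{th:approximation} may be replaced by the standard approximation theorem for noncritical functions on convex sets in $\C^n$ (the case $q=1$ of \cite[Theorem 9.12.1]{Forstneric2017E}), and the requirement $g^{-1}(0)=A$ in Lemmas~\ref{lem:extend1} and~\ref{lem:extend2} is simply dropped. Fix $r\in\N$ and pick an exhaustion $K_1\subset K_2\subset\cdots$ of $X$ by compact $\Ocal(X)$-convex sets with $K_j\subset\mathring K_{j+1}$, starting with a small $K_1$ disjoint from $A$, and let $f_1$ be any continuous function on $X$ that is holomorphic and noncritical on a neighbourhood of $K_1$. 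One then inductively constructs continuous functions $f_j\colon X\to\C$, each holomorphic on a neighbourhood of $K_j$, without critical points there off $A$, agreeing with $h$ to order $r$ on $A$ near $K_j$, and with $f_{j+1}$ approximating $f_j$ on $K_j$; the locally uniform limit $f=\lim_{j\to\infty}f_j$ is then the required function.

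Each step uses the analogues of Lemmas~\ref{lem:extend1} and~\ref{lem:extend2}, weakened so as to ignore the zero set. First, extend $f_j$ to a function holomorphic on a neighbourhood of $K_j\cup(A\cap K_{j+1})$ that approximates $f_j$ on $K_j$ and agrees with $h$ to order $r$ on $A$, via the Cartan-type extension theorem \cite[Theorem 3.4.1 and Remark 3.4.4]{Forstneric2017E} together with the Oka--Weil theorem (this is legitimate because $f_j$ and $h$ differ by a section of $\Jscr_A^{r+1}$ on their common domain near $A$, and at the first step because $K_1$ and $A$ are disjoint); by \cite[Lemma 2.9]{Forstneric2016JEMS}, valid for an arbitrary subvariety, a generic such extension --- perturbed within the affine space of functions agreeing with $h$ to order $r$ on $A$ --- has no critical points off $A$ near $K_j\cup(A\cap K_{j+1})$. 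Then absorb $A\cap K_{j+1}$ into an initial strongly pseudoconvex domain $W_0$ and build up to a neighbourhood of $K_{j+1}$ by the usual finite sequence of convex-bump attachments (the noncritical case) and handle attachments of index $\le\dim X$ (the critical case), as in \cite{Forstneric2003AM} and \cite[\S 9.12--\S 9.13]{Forstneric2017E}. Since the trace of $A$ near $K_{j+1}$ lies in $\mathring W_0$ from the outset, every handle is disjoint from $A$ and every bump alters $f$ only by a biholomorphism tangent to the identity to order $r$ along $A$, so the interpolation condition along $A$ is preserved while noncriticality off $A$ is maintained by the cited constructions.

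I expect the delicate point to be this last placement of the handles: an attaching disc of real dimension as large as $n=\dim X$ must avoid $A$, which --- when $A$ is a hypersurface --- has real codimension only $2$, so that a generic disc would meet it. The reason it can nonetheless be done is structural rather than generic, namely that $A$ is a proper subvariety whose trace near $K_{j+1}$ has been pushed into $\mathring W_0$, so the attaching spheres and discs lie in the complement of that trace; this is exactly the configuration already used in the proof of Lemma~\ref{lem:extend2}, and the noncritical-function technology there applies on $X\setminus A$ verbatim. Finally, if $dh_x\ne 0$ for all $x\in A$, then since $f$ agrees with $h$ to order $r\ge 1$ along $A$ one has $df_x=dh_x\ne 0$ on $A$; as the critical set of $f$ is contained in $A$, it is then empty, which gives the last assertion.
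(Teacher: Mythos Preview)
Your proposal is correct and follows essentially the same approach as the paper, which does not give an independent proof but simply asserts that the argument for Theorem~\ref{th:main1} carries over. You have accurately identified the two simplifications that arise when the zero-set constraint $f^{-1}(0)=A$ is dropped (the plain noncritical approximation theorem suffices in place of Theorem~\ref{th:approximation}, and the analogues of Lemmas~\ref{lem:extend1}--\ref{lem:extend2} need not track the zero locus), and your treatment of the handle-avoidance issue and of the final assertion via $df_x=dh_x$ on $A$ is correct.
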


\begin{proof}[Proof of Theorem \ref{th:main2}]
In this case, $A$ is a complex submanifold of $X$ of codimension $q$ and we are dealing with 
maps $f=(f_1,\ldots, f_q)\colon X\to\C^q$ that are submersions near $A$; the latter condition 
is stable under small perturbations which are fixed on $A$ to the second order. 
The proof follows the same scheme as that of Theorem \ref{th:main1}; see 
\cite[proof of Theorem 2.5]{Forstneric2003AM} for the details. The only difference
is that we must ensure in addition that the map $f$ 
has no zeros on $X\setminus A$. This is taken into account when dealing 
with cases (a) (the noncritical case) and (b) (the critical case) in the proof of Theorem \ref{th:main1}.

An inspection shows that step (a) goes through exactly as before.
However, in step (b) we must make an additional effort
to see that the map  $f=(f_1,\ldots,f_q)$ can be extended smoothly across the disc $M$ 
such that it has no zeros there and its differential is $\C$-linear and of maximal rank $q$ 
at every point of $M$. The first condition holds if we  keep the deformation 
uniformly sufficiently small. The second condition concerning the differential holds generically
when $q\le \frac{n+1}{2}$, so in this case there always exists a 
$q$-coframe $(\theta_1,\ldots,\theta_q)$ on $X$ extending $dh=(dh_1,\ldots, dh_q)$.
On the other hand, when $q> \frac{n+1}{2}$ and we already have a $q$-coframe 
as in  the theorem, we can achieve the required condition on the differential 
$df_x$ for points $x\in M$ in the attached disc 
by applying Gromov's h-principle \cite{Gromov1986E,Gromov1973IAN};
as shown in \cite[proof of Theorem 2.5]{Forstneric2003AM},
the partial differential relation controlling this problem is ample in the coordinate directions. 
This requires a deformation that is big in the $\Cscr^1$ norm but arbitrarily 
small in the $\Cscr^0$ norm; hence we do not introduce any zeros on $M$. 
We complete the proof as before by applying the Mergelyan theorem.
Further details are available in \cite[proof of Theorem 2.5]{Forstneric2003AM}
and also in \cite[\S 9.13]{Forstneric2017E}.
\end{proof}

The construction  in the proofs of Theorems \ref{th:main1} and  \ref{th:main2} 
is easily augmented to provide a homotopy of continuous maps $f_t\colon X\to \C^q_*$ $(t\in[0,1])$ 
such that $f_0=h$, for every $t\in [0,1]$ the map $f_t$ is holomorphic in a neighborhood $V\subset X$ of 
$A$ (independent of $t$) and agrees with $h$ to order $r$ on $A=f_t^{-1}(0)$, and 
the holomorphic map $f=f_1\colon X\to\C^q$ has maximal rank $q$ on $X$ 
(resp.\ on $X\setminus A$ if $q=1$ and we are proving Theorem \ref{th:main1}). 
For the details in a very similar situation we refer the reader to 
\cite[\S 5.13]{Forstneric2017E}.


\subsection*{Acknowledgements}
The author is supported by the research program P1-0291 and grant J1-7256 from 
ARRS, Republic of Slovenia. I wish to thank Antonio Alarc{\'o}n for having asked
the question answered by Corollary \ref{cor:divisors}, thereby inducing me
to write this note, and Frank Kutzschebauch for pointing out the connection
to the problem in Corollary \ref{cor:hypersurface}.



\bibliographystyle{amsplain}


\vspace*{0.5cm}
\noindent Franc Forstneri\v c

\noindent Faculty of Mathematics and Physics, University of Ljubljana, Jadranska 19, SI--1000 Ljubljana, Slovenia

\noindent Institute of Mathematics, Physics and Mechanics, Jadranska 19, SI--1000 Ljubljana, Slovenia

\noindent e-mail: {\tt franc.forstneric@fmf.uni-lj.si}

\end{document}